\documentclass[11pt]{amsart}
\usepackage{amsmath}
\usepackage[active]{srcltx}
\usepackage{t1enc}
\usepackage[latin2]{inputenc}
\usepackage{verbatim}
\usepackage{amsmath,amsfonts,amssymb,amsthm}
\usepackage[mathcal]{eucal}
\usepackage{enumerate}
\usepackage[centertags]{amsmath}
\usepackage{graphics}

\setcounter{MaxMatrixCols}{10}

\topmargin=0.1in \textwidth5.8in \textheight7.8in
\newtheorem{theorem}{Theorem}

\newtheorem{lemma}{Lemma}
\newtheorem{remark}{Remark}
\newtheorem{example}{Example}

\newtheorem{corollary}{Corollary}

\begin{document}
\author{ Ushangi Goginava}
\address{U. Goginava, Department of Mathematical Sciences \\
United Arab Emirates University, P.O. Box No. 15551\\
Al Ain, Abu Dhabi, UAE}
\email{zazagoginava@gmail.com; ugoginava@uaeu.ac.ae}
\author{ Farrukh Mukhamedov}
\address{F. Mukhamedov, 
New Uzbekistan University, 54, Mustaqillik ave. \\
 100007, Tashkent, Uzbekistan;
Central Asian University, Tashkent 111221, Uzbekistan;
United Arab Emirates University, P.O. Box No. 15551\\
Al Ain, Abu Dhabi, UAE}
\email{farrukh.m@uaeu.ac.ae}

\title[limits of sequences of operators]{limits of sequences of operators
associated with Walsh System}
\date{}

\begin{abstract}
The  aim of the current paper is to determine the necessary and sufficient
conditions for the weights $\mathbf{q}=\{q_k\}$, ensuring that the sequence of
operators $\left\{ T_{n}^{\left( \mathbf{q}\right) }f\right\} $ associated with Walsh system, is
convergent almost everywhere for all integrable function $f$. The article also examines the convergence of a sequence of tensor product operators denoted as $\left\{ T_{n}^{( \mathbf{q})}\otimes T_{n}^{( \mathbf{p})}\right\}$ involving functions of two variables.
We point out that recent research by G\'{a}t and Karagulyan (2016) demonstrated that this sequence of tensor product operators cannot converge almost everywhere for every integrable function. In this paper, the necessary and sufficient conditions for the weight are provided which ensure that the sequence of the mentioned operators converges in measure on $L_{1}$.
\end{abstract}

\maketitle

\footnotetext{%
2010 Mathematics Subject Classification. 42C10.
\par
Key words and phrases: Almost Everywhere Divergence, Convergence in Measure,
Sequence of Operators, Walsh functions.}

\section{\protect\bigskip Introduction}

It is well-known that Kolmogorov \cite{kolmogoroff1923serie} solved Lusin's
problem related to the trigonometric Fourier series. Namely, he was able
construct an integrable function whose trigonometric Fourier series diverges
almost everywhere (a.e.). Later on, he modified his proof to show the
existence of an integrable function for which its trigonometric Fourier
diverges at every point \cite{kolmogororf1926s}. In parallel to
trigonometric system, the Walsh system has advantages in the approximation
theory. Therefore, the convergence of Walsh-Fourier series has gained
serious attention amongst many mathematicians. For instance, Stein \cite%
{stein1961limits} was able to establish an analogue of the mentioned
Kolmogorov's result for the Walsh-Fourier series, i.e. it was proved the
existence of an integrable function whose Walsh-Fourier series is divergent
at all points. Let us mention the important results concerning trigonometric
and Walsh systems: Konyagin \cite{konyagin2000everywhere,konyagin2006almost}%
, Bochkarov \cite{bochkarev1975logarithmic,bochkarev2004everywhere}, K{ö}%
rner \cite{korner1981everywhere}. The issues of divergence of Fourier series
on subsequences were studied by Gosselin \cite{gosselin1958divergence},
Totik \cite{totik1982divergence}, Konyagin \cite{kon} Lie \cite%
{lie2012pointwise}, Di Plinio \cite{di2014lacunary}, Goginava, Oniani \cite%
{goginava2021divergence}, Oniani \cite{onian}.

The problem of divergence of Fourier series with respect to a general
orthonormal system was raised by Alexits in his monograph \cite%
{alexits1960konvergenzprobleme}. The problem posed by him is related to the
behavior of the Lebesgue constant of the orthonormal system $\Phi =\left\{
\varphi _{n}\left( x\right) :n\geq 1\right\} $, i.e.%
\begin{equation*}
L_{n}\left( x,\Phi \right) =\int\limits_{0}^{1}\left\vert
\sum\limits_{j=1}^{n}\varphi _{j}\left( x\right) \varphi _{j}\left( t\right)
\right\vert dt\text{ \ \ for }x\in \left[ 0,1\right] .
\end{equation*}

Alexits problem is formulated as follows: Let $\Phi =\left\{ \varphi
_{n}\left( x\right) :n\geq 1\right\} $ be any orthonormal system for which%
\begin{equation}
\overline{\lim\limits_{n\rightarrow \infty }}L_{n}\left( x,\Phi \right)
=+\infty  \label{l}
\end{equation}%
everywhere. Is there an integrable function whose Fourier series is
divergent almost everywhere?

In 1967, Olevskii \cite{olevskii1967order} proved that for any uniformly
bounded orthonormal system there exists a set $E\subset \left[ 0,1\right]
,\mu \left( E\right) >0$ such that condition (\ref{l}) holds for all $x\in E$.

The issue of the divergence of Fourier series with respect to a general
orthonormal system was studied by Bochkarev \cite{bochkarev1975fourier}.
Namely, he considered a uniformly bounded orthonormal system $\Phi =\left\{
\varphi _{n}\left( x\right) :n\geq 1\right\} $, the it is established the
existence of an integrable function $f$ whose Fourier series with respect to
the system $\Phi $ diverges on some $E\subset \left[ 0,1\right]$ with $%
\left\vert E\right\vert >0$. Later on, Kazarian \cite{kazarian1982some}
showed that, in Bochkarov's theorem, the divergence for a positive measure
cannot be replaced by a divergence almost everywhere, which finally gave a
negative answer to the Alexits's problem.

Nonetheless, when examining weighted averages of orthonormal systems, a
compelling question arises: can the Alexits's problem be positively resolved
for these weighted averages? 
In the current paper, we are going to solve such a kind of problem within the Walsh system. 
To be more precise, let us introduce necessary notations. 

Let $f\in L_{1}\left( \mathbb{I}\right) $, $%
\mathbb{I}:=[0,1)$ and $S_{k}(f;x)$ denotes the $k$th partial sums of the
Fourier series with respect to the Walsh system. Assume that $\mathbf{q}%
:=\{q_{k}:k\geq 0\}$ be a sequence of non-negative numbers. Let us consider
a sequence of linear operators defined by
\begin{equation}
T_{n}^{\left( \mathbf{q}\right) }(f;x):=\frac{1}{Q_{n}}%
\sum_{k=1}^{n}q_{n-k}S_{k}(f;x),\quad n\in \mathbb{N},  \label{seq}
\end{equation}%
where
\begin{equation*}
Q_{n}:=\sum_{k=0}^{n-1}q_{k}.
\end{equation*}%
Throughout the article, the sequence $\mathbf{q}$ enjoys the following
assumption:

\begin{itemize}
\item[(A)] the limit $\lim\limits_{k\rightarrow \infty }\left(
Q_{2^{k}}/Q_{2^{k-1}}\right) $ exists.
\end{itemize}

Now let us introduce the notion of \textit{the Lebesgue sequence} for the
sequence (\ref{seq}):%
\begin{equation}  \label{Lq}
L_{n}^{\left( \mathbf{q}\right) }:=\int\limits_{\mathbb{I}}\left\vert \frac{1%
}{Q_{n}}\sum_{k=1}^{n}q_{n-k}D_{k}(x)\right\vert dx.
\end{equation}

\begin{remark}
It's important to emphasize that if $\mathbf{q}$ is non-decreasing, then $%
T_{n}^{\left( \mathbf{q}\right) }(f;x)$ is convergent almost everywhere for
all $f\in L_{1}$ \cite[p. 67]{hardy2000divergent}. Hence, it becomes
pertinent to explore the scenario where $\mathbf{q}$ is non-increasing.
\end{remark}

Let us consider two important examples.

Assume that
\begin{equation}
q_{j}=\left\{
\begin{array}{c}
1,\ \ j=0 \\
0, \ \ j>0%
\end{array}%
\right. .  \label{ps}
\end{equation}

Then $T_{n}^{\left( \mathbf{q}\right) }(f;x)$ coincides with the partial
sums $S_{n}(f;x)$. Its divergence was established by Stein \cite%
{stein1961limits}. Moreover, its Lebesgus sequence diverges as well, i.e. $%
\sup\limits_{n\in \mathbb{N}}L_{n}^{\left( \mathbf{q}\right) }=\infty $.%
\newline

Now, we consider another decreasing sequence $q_{k}=1/k$. Then
\begin{equation}
T_{n}^{\left( \mathbf{q}\right) }(f;x)=\frac{1}{\log n}\sum%
\limits_{k=1}^{n-1}\frac{S_{k}\left( f,x\right) }{n-k}.  \label{lm}
\end{equation}%
In \cite{GatGogiAMSDiv2} it has been furnished an integrable function $f$
for which%
\begin{equation*}
\sup\limits_{n}\left\vert T_{n}^{\left( \mathbf{q}\right) }(f;x)\right\vert
=\infty \text{ a. e. on }\mathbb{I}\text{.}
\end{equation*}%
Moreover, for the corresponding Lebesgue sequence one has $\sup\limits_{n\in
\mathbb{N}}L_{n}^{\left( \mathbf{q}\right) }=\infty $.\newline

The provided examples naturally clarifies the mentioned problem for the
sequence (\ref{seq}). Namely, \textit{let us suppose that the condition
\begin{equation}
\sup\limits_{n\in \mathbb{N}}L_{n}^{\left( \mathbf{q}\right) }=\infty
\label{lf}
\end{equation}%
is satisfied for (\ref{seq}). Is it possible to construct an integrable
function $f$ for which the sequence $\{T_{n}^{\left( \mathbf{q}\right) }(f)\}
$ is divergent almost everywhere?}

In this paper, we positively solve the mentioned problem.

\begin{theorem}
\label{T1} Assume that $\mathbf{q}=\{q_{k}:k\geq 0\}$ is a non-increasing
sequence with (A). Let us suppose that the condition (\ref{lf}) is
satisfied. Then there exists a function $f\in L_{1}\left( \mathbb{I}\right) $
such that the sequence $\{T_{n}^{\left( \mathbf{q}\right) }(f)\}$ (see (\ref%
{seq})) diverges almost everywhere.
\end{theorem}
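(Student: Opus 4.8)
The plan is to realize the almost-everywhere divergence through a quantitative lower bound on the operator norms and a standard gliding-hump / Banach--Steinhaus type construction adapted to $L_1$. First I would show that the hypothesis $\sup_n L_n^{(\mathbf q)}=\infty$ can be upgraded, using assumption (A) and monotonicity of $\mathbf q$, to a lower estimate of the form $\|T_{2^{m}}^{(\mathbf q)}\|_{L_1\to L_1}\ge \|D_{2^{m}}^{(\mathbf q)}\|_1 \ge c\log N$ along a suitable dyadic subsequence, where $D_{n}^{(\mathbf q)}(x)=Q_n^{-1}\sum_{k=1}^{n}q_{n-k}D_k(x)$ is the kernel in \eqref{Lq}. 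The key point here is to locate \emph{where} the kernel is large: because the Dirichlet kernels $D_k$ satisfy the Walsh analogue $D_{2^{n}}(x)=2^{n}\mathbf 1_{[0,2^{-n})}(x)$ and $D_{2^{n}+k}=D_{2^{n}}+w_{2^{n}}D_k$, one can write the averaged kernel on dyadic blocks and extract, on a union of small dyadic intervals $I_j$, a contribution comparable to $\sum 1/j$. Assumption (A) guarantees the weights $Q_{2^k}$ grow regularly enough that these block contributions do not cancel and that $\|D^{(\mathbf q)}_{2^m}\|_1$ is genuinely unbounded exactly when \eqref{lf} holds.

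Next I would convert this into pointwise divergence. The standard device is: it suffices to produce, for each $\varepsilon>0$ and each $m$, a function $g$ with $\|g\|_1\le 1$ supported on a set of measure $\le\varepsilon$ such that $\{x: \sup_n |T_n^{(\mathbf q)}(g;x)|>\lambda_m\}$ has measure bounded below by an absolute constant, with $\lambda_m\to\infty$. One builds $g$ as a normalized indicator $g = |J|^{-1}\mathbf 1_J$ of a small dyadic interval $J$ (a "hump"), translated by dyadic shifts, and uses the explicit Walsh--Dirichlet kernel identities to show that $T_N^{(\mathbf q)}(g;x)$ inherits the logarithmic blow-up of the kernel on a set of proportional measure near the support. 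Summing dyadically many such humps placed on disjoint intervals, each at a geometric scale, and invoking a Borel--Cantelli argument (the "humps" are essentially independent because they live on disjoint dyadic blocks), yields a single $f\in L_1$ with $\sup_n|T_n^{(\mathbf q)}(f;x)|=\infty$ for a.e.\ $x$, hence divergence a.e.

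The main obstacle, and where the real work lies, is the first step: proving the \emph{localized} lower bound for $\|D^{(\mathbf q)}_n\|_1$ from the merely global hypothesis \eqref{lf}, while only assuming $\mathbf q$ non-increasing plus (A). One has to rule out the possibility that $L_n^{(\mathbf q)}$ is large only because of diffuse cancellation-free mass spread thinly over $\mathbb I$ in a way that resists transplantation to a hump; this is handled by the regularity afforded by (A), which ties $L_n^{(\mathbf q)}$ to the partial ratios $Q_{2^k}/Q_{2^{k-1}}$ and lets one pass to a subsequence $n=2^{m}$ along which the kernel mass concentrates on controlled dyadic sets. A secondary technical nuisance is bookkeeping the "tail" terms $Q_n^{-1}\sum_{k} q_{n-k} w_{2^j}D_{k-2^j}$ produced by the kernel splitting and showing they do not destroy the lower bound — here monotonicity of $\mathbf q$ (so that $q_{n-k}$ varies slowly across a dyadic block) is exactly what makes the estimate go through.
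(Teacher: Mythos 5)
The decisive gap is in your second step. A single normalized indicator hump $g=|J|^{-1}\chi_J$ can never satisfy $\mu\{x:\sup_n|T_n^{(\mathbf q)}(g;x)|>\lambda_m\}\ge c_0>0$ with $\lambda_m\to\infty$. Split the kernel as the paper does (following \cite{goginava2023some}), ${F}_n^{(\mathbf q)}=F_{n,1}^{(\mathbf q)}+F_{n,2}^{(\mathbf q)}$: the $F_{n,2}^{(\mathbf q)}$ part already has a weak $(1,1)$ maximal function, and for the main part one has the pointwise bound $|F_{n,1}^{(\mathbf q)}(y)|\le Q_n^{-1}\sum_{j\le s}\varepsilon_j(n)Q_{n(j)}2^{j}\le 2^{s+1}$ for $y\in I_s\setminus I_{s+1}$, so $\sup_n|F_{n,1}^{(\mathbf q)}|$ lies in weak $L^1$ with an absolute bound; a routine check then gives $\mu\{\sup_n|T_n^{(\mathbf q)}(g)|>\lambda\}\le c/\lambda$ uniformly over dyadic-interval humps $g$. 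In other words, the blow-up of $L_n^{(\mathbf q)}$ is a cumulative, sign-sensitive effect spread over many dyadic blocks and does \emph{not} localize on a hump --- exactly as single humps do not suffice for the Kolmogorov--Stein divergence examples for partial sums. Your first step is also quantitatively off: condition (\ref{lf}) only forces $\rho(\mathbf q)=\infty$ (Lemma \ref{QQ}), and $Q_{2^n}^{-1}\sum_{k\le n}Q_{2^k}$ may diverge arbitrarily slowly, so no bound of the form $c\log N$ is available; the attainable divergence rate is governed by how long the $Q_{2^k}$ stay flat, which is what the paper's sequence $\gamma_N$ measures.

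What actually drives the paper's proof, and is absent from your outline, is a resonance mechanism in which both the test function and the index are adapted to the flat blocks of $Q$ and to the point $x$. Using Lemma \ref{QQ} one extracts, for each $N$, about $2\gamma_N$ consecutive scales with $Q_{2^{N-s}}\ge cQ_{2^{N}}$ (see (\ref{l2})); one builds $P_N$, a superposition of $2^{N-\gamma_N}$ translated Dirichlet humps modulated by $\sum_{l=N-2\gamma_N}^{N-\gamma_N-1}w_{2^l}$ and normalized so that $\Vert P_N\Vert_1\le1$; and then, for a.e.\ $x$ (the full-measure set $E'$), one chooses indices $n(N_k,x)$ whose binary digits in that block are matched to the digits of $x$ (via $\varepsilon_l(x)=1-x_l$ or $x_l$), so that the $\gamma_k$ block contributions add with one sign and $|P_{N_k}\ast F_{n(N_k,x),1}^{(\mathbf q)}(x)|\ge c\sqrt{\gamma_k}$. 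The final function is the explicit series $f=\sum_j\gamma_j^{-1/4}P_{N_j}$, with cross terms annihilated by orthogonality for $j>k$ and controlled by (\ref{3}) for $j<k$; no Borel--Cantelli or independence argument is needed, and the a.e.\ statement comes from the construction itself rather than from a positive-measure divergence set plus translation tricks. Without the point-dependent choice of $n$ and a multi-scale, modulated test function, your scheme cannot reach unbounded levels on sets of fixed positive measure, let alone almost everywhere.
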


It turns out that the condition (\ref{lf}) can be characterized through $%
\mathbf{q}=\{q_{k}:k\geq 0\}$. Namely, we have the next result.

\begin{lemma}
\label{QQ} Assume that $\mathbf{q}=\{q_{k}:k\geq 0\}$ is a non-increasing
sequence with (A). Then, the following statements are equivalent:

\begin{itemize}
\item[(i)] $\sup\limits_{n\in \mathbb{N}}L_{n}^{\left( \mathbf{q}\right)
}=\infty ;$

\item[(ii)] $\rho \left( \mathbf{q}\right) :=\sup\limits_{n\in \mathbb{N}%
}\left( \frac{1}{Q_{2^{n}}}\sum\limits_{k=0}^{n}Q_{2^{k}}\right) =\infty ;$

\item[(iii)] $\lim\limits_{k\rightarrow \infty }\frac{Q_{2^{k}}}{Q_{2^{k-1}}}%
=1;$

\item[(iv)] $\lim\limits_{k\rightarrow \infty }\frac{2^{k}q_{2^{k}}}{%
Q_{2^{k}}}=0.$
\end{itemize}
\end{lemma}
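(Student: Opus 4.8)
The plan is to establish the cycle of implications $(ii)\Rightarrow(i)$, then $(i)\Rightarrow(iii)$, then $(iii)\Leftrightarrow(iv)$, and finally $(iii)\Rightarrow(ii)$, using the structure of the Walsh--Dirichlet kernels to relate the Lebesgue sequence $L_n^{(\mathbf q)}$ to the arithmetic of the partial sums $Q_{2^k}$. The key analytic tool is the classical identity $D_{2^m}=2^m\mathbf 1_{[0,2^{-m})}$ together with the dyadic decomposition $D_k=\sum_j \varepsilon_j w_{?}D_{2^j}$ for $k=\sum \varepsilon_j 2^j$; combining this with the definition $(\ref{Lq})$ lets one write $L_n^{(\mathbf q)}$ for $n=2^N$ (the decisive subsequence, legitimate because of assumption (A)) as, up to bounded multiplicative constants, $\frac{1}{Q_{2^N}}\sum_{k=0}^{N} Q_{2^k}$ plus controllable error terms. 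This is exactly the quantity $\rho(\mathbf q)$ from $(ii)$, which gives $(i)\Leftrightarrow(ii)$ directly once the kernel estimate is in place; the remaining implications are purely about the scalar sequence $\{Q_{2^k}\}$.

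For the scalar equivalences I would argue as follows. Set $r_k:=Q_{2^k}/Q_{2^{k-1}}\ge 1$ (monotonicity of $\mathbf q$ forces $Q_{2^k}\ge 2 Q_{2^{k-1}}-$ something; in fact since $\mathbf q$ is non-increasing, $Q_{2^k}-Q_{2^{k-1}}=\sum_{j=2^{k-1}}^{2^k-1}q_j\le 2^{k-1}q_{2^{k-1}}\le Q_{2^{k-1}}$, so $r_k\le 2$, and also $r_k\ge 1$). By assumption (A) the limit $r:=\lim_k r_k$ exists and lies in $[1,2]$. If $r>1$ then $Q_{2^N}\ge c\, r^N$ grows geometrically while each $Q_{2^k}\le C r^k$, so $\frac{1}{Q_{2^N}}\sum_{k\le N}Q_{2^k}$ is bounded by a convergent geometric series: $(ii)$ fails. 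Conversely if $r=1$, then $Q_{2^N}/Q_{2^k}=\prod_{j=k+1}^N r_j\to 1$ for $N-k$ fixed, and more care (e.g. for each $\varepsilon$ choosing $k_0$ with $r_j<1+\varepsilon$ for $j\ge k_0$, then letting $N\to\infty$) shows $\frac1{Q_{2^N}}\sum_{k=0}^N Q_{2^k}\to\infty$; hence $(ii)$ holds. This proves $(ii)\Leftrightarrow(iii)$. For $(iii)\Leftrightarrow(iv)$, note $Q_{2^k}-Q_{2^{k-1}}=\sum_{j=2^{k-1}}^{2^k-1}q_j$, which by monotonicity is squeezed between $2^{k-1}q_{2^k}$ and $2^{k-1}q_{2^{k-1}}$; dividing by $Q_{2^k}$ shows that $r_k\to 1$ is equivalent to $2^{k}q_{2^k}/Q_{2^k}\to 0$ (the factor $2$ versus $2^{k-1}$ and the ratio $q_{2^{k-1}}/q_{2^k}$ being absorbed using $r_{k}\to1$ again, a small bootstrap one has to be slightly careful with).

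The step I expect to be the main obstacle is the first one: the precise two-sided estimate
\begin{equation*}
c\,\Bigl(\frac{1}{Q_{2^N}}\sum_{k=0}^{N}Q_{2^k}\Bigr)\ \le\ \sup_{n\le 2^N} L_n^{(\mathbf q)}\ \le\ C\,\Bigl(\frac{1}{Q_{2^N}}\sum_{k=0}^{N}Q_{2^k}\Bigr)+C.
\end{equation*}
The upper bound uses the $L^1$-bound $\|D_{2^j}\|_1=1$ and the partial-summation rearrangement $\frac1{Q_n}\sum_{k=1}^n q_{n-k}D_k = \frac1{Q_n}\sum_{j} (\text{difference of }Q\text{'s})\,D_{2^j}+\dots$, so that after grouping dyadic blocks the $L^1$ norm is dominated by $\frac1{Q_{2^N}}\sum_k Q_{2^k}$ times an absolute constant. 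The lower bound is more delicate: one must exhibit, on a set of bounded-below measure (a dyadic interval of length comparable to $2^{-N}$, where all the kernels $D_{2^j}$, $j\le N$, add up in phase), that cancellation does not occur, so that $|\frac1{Q_n}\sum q_{n-k}D_k|$ is genuinely of size $\frac1{Q_{2^N}}\sum_k Q_{2^k}$ there. This is where the non-increasing hypothesis on $\mathbf q$ (which forbids sign-type oscillation in the coefficients) and assumption (A) (which lets us restrict to $n=2^N$ without losing a constant) are both essential. Once this localization estimate is nailed down, the equivalence $(i)\Leftrightarrow(ii)$ is immediate and the rest of the lemma is the elementary scalar analysis sketched above.
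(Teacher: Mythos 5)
Your scalar arguments for (ii)$\Leftrightarrow$(iii)$\Leftrightarrow$(iv) are correct and close to the paper's: the paper proves (ii)$\Rightarrow$(iii) by the same geometric-domination contrapositive, gets (iii)$\Leftrightarrow$(iv) from exactly the squeeze $2^{k-1}q_{2^k}\le Q_{2^k}-Q_{2^{k-1}}\le 2^{k-1}q_{2^{k-1}}$ (no bootstrap is needed: the upper term is just hypothesis (iv) at index $k-1$), and closes the loop via (iv)$\Rightarrow$(ii) using $\delta_k=2^kq_{2^k}/Q_{2^k}$ and $c_1\le Q_{2^n}^{-1}\sum_{k\le n}2^kq_{2^k}\le c_2$, whereas you prove (iii)$\Rightarrow$(ii) directly by choosing $k_0$ with $Q_{2^j}/Q_{2^{j-1}}<1+\varepsilon$ for $j\ge k_0$ and summing the geometric tail; either route is fine.

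The genuine gap is the step you flag yourself, (i)$\Leftrightarrow$(ii). The paper does not prove it (it is quoted from the earlier Goginava--Nagy paper), so you would have to supply a proof, and the one you sketch fails at its central claim: $n=2^N$ is not a ``decisive subsequence,'' and assumption (A) gives no reduction to it. Indeed, by the decomposition \eqref{O1}--\eqref{O2} used later in the paper one has $F_{2^N}^{\left(\mathbf{q}\right)}=D_{2^N}+F_{2^N,2}^{\left(\mathbf{q}\right)}$, and for non-increasing $\mathbf{q}$ the map $f\mapsto f\ast F_{n,2}^{\left(\mathbf{q}\right)}$ is uniformly bounded on $L_{1}$, so $L_{2^N}^{\left(\mathbf{q}\right)}\le 1+c$ for \emph{every} such $\mathbf{q}$ --- in particular for the N\"orlund logarithmic means $q_k=1/(k+1)$, where $\rho\left(\mathbf{q}\right)=\infty$. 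Hence your proposed two-sided estimate $c\,\rho_N\le\sup_{n\le 2^N}L_n^{\left(\mathbf{q}\right)}$ restricted to dyadic $n$ cannot hold. Your ``in phase'' heuristic also fails quantitatively: on $I_N$ all $D_k$, $k\le 2^N$, are positive, but that region has measure $2^{-N}$ and contributes only $2^{-N}Q_{2^N}^{-1}\sum_{k}q_{2^N-k}\,k=O(1)$ to the $L_1$ norm, not an amount of order $\rho_N$. Large Lebesgue constants come from indices with many binary blocks --- this is precisely why the paper's own divergence-in-measure argument uses $n_k=2^{2k}+2^{2k-2}+\cdots+2^0$, for which the kernel contains one term $Q_{n_k(j)}w_{2^{2j}}D_{2^{2j}}$ per digit and a lower bound of the type $L_n^{\left(\mathbf{q}\right)}\gtrsim Q_n^{-1}\sum_j\varepsilon_j(n)Q_{2^j}$ becomes available. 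So either cite the equivalence of (i) and (ii) as the paper does, or rebuild your first step around such multi-digit test indices rather than powers of two.
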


\begin{proof}
The equivalence of (i) and (ii) is proved in \cite{goginava2023some}. Let us
establish (ii)$\Rightarrow$ (iii). Assume that (iii) does not hold, then one
can find $q^{\prime }$ $\in \left( 0,1\right) $ such that
\begin{equation*}
Q_{2^{l}}\leq \left( q^{\prime }\right) ^{k-l}Q_{2^{k}}\left( k_{0}\leq
l\leq k\right) ,
\end{equation*}%
which yields $\rho \left( \mathbf{q}\right) <\infty $, and consequently the
condition (ii) also does not hold.

Due to
\begin{equation*}
\frac{2^{k-1}q_{2^{k}}}{Q_{2^{k-1}}}\leq \frac{Q_{2^{k}}-Q_{2^{k-1}}}{%
Q_{2^{k-1}}}\leq \frac{2^{k-1}q_{2^{k-1}}}{Q_{2^{k-1}}}.
\end{equation*}%
the equivalence of (iii) and (iv) is immediately obtained.

Finally, let us prove that (iv)$\Rightarrow $ (ii). Denote $\delta
_{k}:=2^{k}q_{2^{k}}/Q_{2^{k}}$. Then
\begin{equation}
\frac{1}{Q_{2^{n}}}\sum\limits_{k=0}^{n}Q_{2^{k}}=\frac{1}{Q_{2^{n}}}%
\sum\limits_{k=0}^{n}\frac{2^{k}q_{2^{k}}}{\delta _{k}}.  \label{rho1}
\end{equation}%
Using the monotonicity of $\mathbf{q}$ one can find
\begin{equation*}
c_{1}\leq \frac{1}{Q_{2^{n}}}\sum\limits_{k=0}^{n}2^{k}q_{2^{k}}\leq
c_{2}\left( n\geq n_{0}\right) ,
\end{equation*}%
where $c_{1},c_{2}>0$. Now, the last one together with $1/\delta
_{k}\rightarrow \infty \left( k\rightarrow \infty \right) $ and \eqref{rho1}
yield $\rho \left( \mathbf{q}\right) =\infty $.

This completes the proof.
\end{proof}

We stress that if the condition (\ref{lf}) is not fulfilled, then for every
integrable function, the operator sequence (\ref{seq}) is convergent almost
everywhere \cite{goginava2023some}.

Now, taking into account this result together with Lemma \ref{QQ}, we may
formulated the next result about the convergence.

\begin{corollary}
Assume that $\mathbf{q}=\{q_{k}:k\geq 0\}$ is a non-increasing sequence with
(A). Then the following three statements are equivalent:

\begin{itemize}
\item[(i)] $\sup\limits_{n\in \mathbb{N}}L_{n}^{\left( \mathbf{q}\right)
}<\infty ;$\newline

\item[(ii)] $\rho \left( \mathbf{q}\right) <\infty ;$

\item[(iii)] For every $f\in L_1(\mathbb{I})$, the operator sequence $%
\{T_{n}^{\left( \mathbf{q}\right) }(f)\}$ given by (\ref{seq}) converges
almost everywhere.
\end{itemize}
\end{corollary}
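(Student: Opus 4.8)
The plan is to assemble the corollary directly from three ingredients already in hand: Lemma \ref{QQ}, Theorem \ref{T1}, and the convergence result of \cite{goginava2023some} quoted immediately before the corollary. None of the three implications requires fresh estimates; the role of the corollary is simply to repackage the established dichotomy in the ``finite/convergent'' direction.

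First I would dispose of the equivalence (i)$\Leftrightarrow$(ii): this is nothing but the contrapositive of the equivalence (i)$\Leftrightarrow$(ii) of Lemma \ref{QQ}. That lemma asserts $\sup_{n}L_{n}^{\left( \mathbf{q}\right) }=\infty$ if and only if $\rho\left(\mathbf{q}\right)=\infty$; negating both sides gives $\sup_{n}L_{n}^{\left( \mathbf{q}\right) }<\infty$ if and only if $\rho\left(\mathbf{q}\right)<\infty$. The only point to note is that for the suprema in question ``not $\infty$'' means ``finite,'' which is immediate.

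Next comes (i)$\Rightarrow$(iii): if $\sup_{n}L_{n}^{\left( \mathbf{q}\right) }<\infty$, then condition (\ref{lf}) fails, and the theorem of \cite{goginava2023some} restated just above the corollary yields that $\{T_{n}^{\left( \mathbf{q}\right) }(f)\}$ (see (\ref{seq})) converges almost everywhere for every $f\in L_{1}(\mathbb{I})$. Finally, (iii)$\Rightarrow$(i) is the contrapositive of Theorem \ref{T1}: if (i) fails, i.e.\ $\sup_{n}L_{n}^{\left( \mathbf{q}\right) }=\infty$, then since $\mathbf{q}$ is non-increasing and satisfies (A), Theorem \ref{T1} supplies an $f\in L_{1}(\mathbb{I})$ whose sequence $\{T_{n}^{\left( \mathbf{q}\right) }(f)\}$ diverges almost everywhere, so (iii) fails. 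Putting these together closes the loop: (iii)$\Rightarrow$(i), (i)$\Leftrightarrow$(ii), and (i)$\Rightarrow$(iii), which is the asserted three-way equivalence.

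There is no genuine obstacle in this argument, since every hard estimate has been absorbed into Theorem \ref{T1}, Lemma \ref{QQ}, and \cite{goginava2023some}. The only thing requiring care is bookkeeping: one must check that the standing hypotheses here (namely $\mathbf{q}$ non-increasing and satisfying (A)) are exactly the hypotheses under which Theorem \ref{T1}, Lemma \ref{QQ}, and the cited result from \cite{goginava2023some} were stated — which they are — so that each citation applies verbatim.
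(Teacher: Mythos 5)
Your proposal is correct and matches the paper's own (implicit) argument: the corollary is stated there as an immediate consequence of Lemma \ref{QQ} (for (i)$\Leftrightarrow$(ii)), the cited convergence result of \cite{goginava2023some} when (\ref{lf}) fails (for (i)$\Rightarrow$(iii)), and Theorem \ref{T1} in contrapositive form (for (iii)$\Rightarrow$(i)). No further comment is needed.
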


\begin{remark}
It is well-known that the convergence in measure is weaker that a.e.
convergence, so it is reasonable to explore the convergence of trigonometric
Fourier series in measure. Again, this problem was successfully investigated
by Kolmogoroff \cite{kolmogoroff1925fonctions}. He proved that any $f\in
L_{1}$ its trigonometric Fourier series always converges in measure. The
similar kind of result has been established by Burkholder \cite{Burk} (see
also \cite[Ch. 3]{SWS}) for the Walsh systems in $L_{1}$. We point out that
in \cite{goginava2023some} it has been established that for any non-increasing sequence
$\mathbf{q}=\{q_{k}:k\geq 0\}$ the sequence $\{T_{n}^{\left( \mathbf{q}%
\right) }(f)\}$ given by (\ref{seq}) converges in measure for any  $f\in L_1(%
\mathbb{I})$.
\end{remark}

However, in the two-dimensional setting, the situation drastically changes.
Konyagin \cite{konyagin1993subsequence} and Getsadze \cite%
{getsadze1992divergence} (independently of each other) have proved the
existence of two-variable integrable function whose square partial sums of
trigonometric Fourier (Walsh-Fourier) series diverges in measure. The
considered square partial sums of the two-dimensional trigonometric (Walsh)
Fourier series can be rewritten as tensor product of one-dimensional
trigonometric Fourier series. This fact was a key point in the mentioned
papers. Such kind of representation led the consideration of sequences of
operators acting on double-variable $L_{1}$-spaces which are represented as
tensor product of one-variable operators. It turned out that such kind of
sequences may converge or diverge in measure on $L_{1}$. Recently, G\'at and
Kargulyan \cite{gat2016convergence}, by employing tensor product technique,
have proved a.e. divergence of matrix transforms of rectangular partial sums
of Fourier series with respect to complete orthonormal system.


The second focus of this paper explores sequences of linear operators
defined as partial sums linked to the weighted Walsh-Fourier series %
\eqref{seq}. Examples of such sequences include two-dimensional Fej'er
means, Cesáro means, logarithmic means, Nörlund means, and others widely
recognized in the field.


The second main aim of the current paper is to determine the necessary and
sufficient conditions for the weights, ensuring that the discussed sequence
of operators converges in measure on $L_{1}$.

To formulate the result, let us recall some notions.

The Kronecker product $\left( w_{n,m};n,m\in \mathbb{N}\right) $ of two
Walsh systems is said to be the two-dimensional Walsh system. Thus
\begin{equation*}
w_{n,m}(x,y):=w_{n}(x)\otimes w_{m}(y).
\end{equation*}%
Define
\begin{equation}
\left( S_{n}\right) _{1}f\left( x,y\right) :=S_{n}f\left( \cdot ,y\right) .
\label{1}
\end{equation}%
In this right side of (\ref{1}) $f\left( \cdot ,y\right) $ is considered as
a function in the variable $x$ and the other variable is fixed almost
everywhere. Analogously, one can define
\begin{equation*}
\left( S_{n}\right) _{2}f\left( x,y\right) :=S_{n}f\left( x,\cdot \right) .
\end{equation*}

Given two monotone non-increasing sequences $\mathbf{q}=\left\{ q_{n}:n\in
\mathbb{N}\right\} $, $\mathbf{p}=\left\{ p_{n}:n\in \mathbb{N}\right\} $
let us set
\begin{equation}
\left( T_{n}^{\left( \mathbf{q}\right) }\right) _{1}:=\frac{1}{Q_{n}}%
\sum_{k=1}^{n}q_{n-k}\left( S_{k}\right) _{1},\ \ \left( T_{n}^{\left(
\mathbf{p}\right) }\right) _{2}:=\frac{1}{P_{n}}\sum_{k=1}^{n}p_{n-k}\left(
S_{k}\right) _{2}\quad (n\in {\mathbb{N}}).  \label{TT}
\end{equation}%
Their tensor product is defined by%
\begin{equation*}
T_{n,m}^{\left( \mathbf{q,p}\right) }:=\left( T_{n}^{\left( \mathbf{q}%
\right) }\right) _{1}\circ \left( T_{m}^{\left( \mathbf{p}\right) }\right)
_{2}.
\end{equation*}%
It is easy to see that%
\begin{equation*}
T_{n,m}^{\left( \mathbf{q},\mathbf{p}\right) }\left( f\right) =f\ast \left( {%
F}_{n}^{\left( \mathbf{q}\right) }\otimes {F}_{m}^{\left( \mathbf{p}\right)
}\right) .
\end{equation*}

We denote by $L_{0}=L_{0}(\mathbb{I}^{2})$ the Lebesgue space of functions
that are measurable and finite almost everywhere on $\mathbb{I}^{2}$.
Moreover, $\boldsymbol{\mu }(A)$ stands for the two dimensional Lebesgue
measure of a set $A\subset \mathbb{I}^{2}$.

Denote
\begin{equation}
\pi _{k}=\pi _{k}\left( \mathbf{q}\right) :=\frac{Q_{2^{k}}-Q_{2^{k-1}}}{%
Q_{2^{k-1}}}  \label{eps}
\end{equation}%
and%
\begin{equation*}
\beta \left( \mathbf{q}\right) :=\lim\limits_{k\rightarrow \infty }\pi
_{k}\left( \mathbf{q}\right) .
\end{equation*}

\begin{theorem}
\label{convmeas} Assume that $\mathbf{p}=\left\{ p_{n}:n\in \mathbb{N}%
\right\} $ and $\mathbf{q}=\left\{ q_{n}:n\in \mathbb{N}\right\} $ are
monotone non-increasing sequences of non-negative numbers with (A). Then the
following statements hold:

\begin{enumerate}
\item[(i)] if
\begin{equation}
\beta \left( \mathbf{q}\right) +\beta \left( \mathbf{p}\right) >0.
\label{condmeasconv}
\end{equation}%
Then, for every Let $f\in L_{1}\left( \mathbb{I}^{2}\right) $, one has
\begin{equation*}
\left\vert T_{n,m}^{\left( \mathbf{q},\mathbf{p}\right) }\left( f,x,y\right)
-f\left( x,y\right) \right\vert \rightarrow 0\text{ in measure on }\mathbb{I}%
^{2},\text{ as }n,m\rightarrow \infty ;
\end{equation*}

\item[(ii)] if
\begin{equation}
\beta \left( \mathbf{q}\right) +\beta \left( \mathbf{q}\right) =0,
\label{divmeasure}
\end{equation}%
then the set of the functions from $L_{1}\left( \mathbb{I}^{2}\right) $ for
which the sequence $T_{n,m}^{\left( \mathbf{q},\mathbf{p}\right) }\left(
f\right) $ is convergent in measure on $\mathbb{I}^{2}$ forms the first
Baire category in $L_{1}\left( \mathbb{I}^{2}\right) $.
\end{enumerate}
\end{theorem}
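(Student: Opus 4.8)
The plan is to route both parts through one dichotomy --- whether the family $\{T_{n,m}^{(\mathbf{q},\mathbf{p})}\}$ is equicontinuous in measure at $0$ --- and to reduce everything else to soft arguments. Two preliminary remarks. First, in the two-dimensional Walsh basis every operator is a positive Fourier multiplier: $T_{n,m}^{(\mathbf{q},\mathbf{p})}(w_{i}\otimes w_{j})=\tfrac{Q_{n-i}}{Q_{n}}\,\tfrac{P_{m-j}}{P_{m}}\,w_{i}\otimes w_{j}$ when $i<n$, $j<m$, and $0$ otherwise; since $q_{k}/Q_{k}\to0$ and $p_{k}/P_{k}\to0$ by monotonicity, for fixed $i,j$ both factors tend to $1$, so $T_{n,m}^{(\mathbf{q},\mathbf{p})}P\to P$ uniformly for every two-dimensional Walsh polynomial $P$. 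Second, by the one-dimensional theorem of \cite{goginava2023some}, for \emph{any} non-increasing sequence the operators $T_{m}^{(\mathbf{p})}$ converge in measure on $L_{1}(\mathbb{I})$; these maps are continuous into $L_{0}(\mathbb{I})$ (they are even bounded on $L_{1}$), and $L_{0}$ over a finite measure space is a topological vector space, so a pointwise convergent, hence pointwise bounded, family of them is equicontinuous by Banach--Steinhaus. Integrating in the remaining variable, $\{(T_{m}^{(\mathbf{p})})_{2}\}$ is equicontinuous in measure at $0$ on $L_{1}(\mathbb{I}^{2})$, and likewise $\{(T_{n}^{(\mathbf{q})})_{1}\}$; each fixed $T_{n,m}^{(\mathbf{q},\mathbf{p})}$ is bounded $L_{1}(\mathbb{I}^{2})\to L_{1}(\mathbb{I}^{2})$ with norm $L_{n}^{(\mathbf{q})}L_{m}^{(\mathbf{p})}$.

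For (i), assume without loss of generality $\beta(\mathbf{q})>0$; by Lemma \ref{QQ} this is equivalent to $C_{\mathbf{q}}:=\sup_{n}L_{n}^{(\mathbf{q})}<\infty$, and then $(T_{n}^{(\mathbf{q})})_{1}$ is bounded on $L_{1}(\mathbb{I}^{2})$ with norm $\le C_{\mathbf{q}}$ uniformly in $n$. As the two factors commute, $T_{n,m}^{(\mathbf{q},\mathbf{p})}f=(T_{m}^{(\mathbf{p})})_{2}\bigl((T_{n}^{(\mathbf{q})})_{1}f\bigr)$ with $\bigl\|(T_{n}^{(\mathbf{q})})_{1}f\bigr\|_{1}\le C_{\mathbf{q}}\|f\|_{1}$, so the equicontinuity of $\{(T_{m}^{(\mathbf{p})})_{2}\}$ transfers to the whole family $\{T_{n,m}^{(\mathbf{q},\mathbf{p})}\}$. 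Combined with convergence on the dense set of Walsh polynomials, an $\varepsilon/3$ splitting $f=P+(f-P)$ with $\|f-P\|_{1}$ small yields $T_{n,m}^{(\mathbf{q},\mathbf{p})}f\to f$ in measure for every $f\in L_{1}(\mathbb{I}^{2})$; the limit must be $f$ since it is $f$ on polynomials and the family is equicontinuous.

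For (ii) one has $\beta(\mathbf{q})=\beta(\mathbf{p})=0$, hence $\sup_{n}L_{n}^{(\mathbf{q})}=\sup_{m}L_{m}^{(\mathbf{p})}=\infty$ (Lemma \ref{QQ}). The plan is to prove that the family $\{T_{n,m}^{(\mathbf{q},\mathbf{p})}\}$ is \emph{not} equicontinuous in measure at $0$ and then run a Baire-category argument. Granting non-equicontinuity, there are $\varepsilon_{0}>0$, Walsh polynomials $\phi_{k}$ with $\|\phi_{k}\|_{1}\to0$ and indices $n_{k},m_{k}$ with $\boldsymbol{\mu}\{|T_{n_{k},m_{k}}^{(\mathbf{q},\mathbf{p})}\phi_{k}|>\varepsilon_{0}\}\ge\varepsilon_{0}$; a pigeonhole argument forces $n_{k},m_{k}\to\infty$ (if $n_{k}$ stayed in a finite set along a subsequence, some value $n^{*}$ would recur, and then either $m_{k}$ is also bounded, contradicting continuity $L_{1}\to L_{0}$ of the fixed operator $T_{n^{*},m^{*}}^{(\mathbf{q},\mathbf{p})}$, or $m_{k}\to\infty$, so that $T_{n^{*},m_{k}}^{(\mathbf{q},\mathbf{p})}\phi_{k}=(T_{m_{k}}^{(\mathbf{p})})_{2}\bigl((T_{n^{*}}^{(\mathbf{q})})_{1}\phi_{k}\bigr)$ with $\bigl\|(T_{n^{*}}^{(\mathbf{q})})_{1}\phi_{k}\bigr\|_{1}\to0$, contradicting equicontinuity of $\{(T_{m}^{(\mathbf{p})})_{2}\}$). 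Then the set $\mathcal{A}$ of $f$ for which $\{T_{n,m}^{(\mathbf{q},\mathbf{p})}f\}$ is Cauchy in measure is contained in $\bigcup_{N}H_{N}$ where $H_{N}=\bigcap_{n,m,n',m'\ge N}\{f:\boldsymbol{\mu}\{|T_{n,m}^{(\mathbf{q},\mathbf{p})}f-T_{n',m'}^{(\mathbf{q},\mathbf{p})}f|>\varepsilon_{0}/4\}\le\varepsilon_{0}/4\}$; each $H_{N}$ is closed by Fatou's lemma and has empty interior: in any ball, which may be centred at a polynomial $P$, take $k$ large with $n_{k},m_{k}\ge N$, $\|\phi_{k}\|_{1}$ below the radius, $\|T_{n_{k},m_{k}}^{(\mathbf{q},\mathbf{p})}P-P\|_{\infty}<\varepsilon_{0}/4$, together with $n_{k}',m_{k}'\ge N$ exceeding the degrees of $P$ and $\phi_{k}$, so that $T_{n_{k}',m_{k}'}^{(\mathbf{q},\mathbf{p})}(P+\phi_{k})=P+\phi_{k}$; since $\boldsymbol{\mu}\{|T_{n_{k},m_{k}}^{(\mathbf{q},\mathbf{p})}\phi_{k}-\phi_{k}|>\varepsilon_{0}/2\}\ge\varepsilon_{0}/2$ for large $k$ (the set $\{|\phi_{k}|>\varepsilon_{0}/2\}$ being $\boldsymbol{\mu}$-small), the two index pairs witness $P+\phi_{k}\notin H_{N}$. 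So $\mathcal{A}$ is of first category.

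The main obstacle is the construction of the polynomials $\phi_{k}$. The one-dimensional operators are individually equicontinuous in measure, so this cannot be done within a single variable --- the tensor interaction is indispensable, which is exactly the obstruction found by Konyagin \cite{konyagin1993subsequence} and Getsadze \cite{getsadze1992divergence} for square partial sums (and, unlike in Theorem \ref{T1}, a maximal-function mechanism in one variable is of no help here). I would choose $n_{k}=2^{A_{k}}$, $m_{k}=2^{B_{k}}$ along which $L_{n_{k}}^{(\mathbf{q})},L_{m_{k}}^{(\mathbf{p})}\to\infty$ (equivalently $\rho(\mathbf{q})=\rho(\mathbf{p})=\infty$, Lemma \ref{QQ}); use the estimates of \cite{goginava2023some} for the distribution of the $L_{1}$-mass of the N\"orlund kernels $F_{n_{k}}^{(\mathbf{q})}$, $F_{m_{k}}^{(\mathbf{p})}$ over the dyadic intervals $[2^{-j-1},2^{-j})$, where the $j$-th block carries mass of order $Q_{2^{j}}/Q_{2^{A_{k}}}$ (resp.\ $P_{2^{j}}/P_{2^{B_{k}}}$), the sum over $j\le A_{k}$ blowing up precisely because $\rho=\infty$; and build $\phi_{k}$ as a normalised \emph{sum} $\sum_{j}\alpha_{j}\,u_{j}(x)\otimes v_{j}(y)$ of block-adapted Walsh polynomials, arranged so that the corresponding pieces of $(F_{n_{k}}^{(\mathbf{q})}\otimes F_{m_{k}}^{(\mathbf{p})})*\phi_{k}$ add up constructively on a set of $\boldsymbol{\mu}$-measure bounded away from $0$, while $\|\phi_{k}\|_{1}$ stays small by $L^{2}$-near-orthogonality of the pieces. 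Carrying out this alignment --- the choice of blocks, of the supports $u_{j},v_{j}$, and of the coefficients $\alpha_{j}$ so that large values reinforce rather than cancel, with $\|\phi_{k}\|_{1}$ under control --- is the technical heart; the rest is the soft machinery above.
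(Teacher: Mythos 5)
Your part (i) is correct and is a genuinely softer route than the paper's. The paper combines the uniform weak $(1,1)$ bound for $\{T_{n}^{(\mathbf{q})}\}$ with the uniform $L_{1}$ bound for $\{T_{m}^{(\mathbf{p})}\}$ (coming from $\beta(\mathbf{p})>0$ via Lemma \ref{QQ}) and Zygmund's iteration method; you instead extract equicontinuity in measure of the one-dimensional family from the convergence-in-measure theorem of \cite{goginava2023some} via the uniform boundedness principle in $L_{0}$, transfer it to $\mathbb{I}^{2}$ by a Chebyshev--Fubini argument, and compose with the $L_{1}$-bounded factor; density of Walsh polynomials finishes it. One small slip, repeated later: $T_{n',m'}P=P$ is never exact, since the multiplier is $Q_{n'-i}Q_{n'}^{-1}P_{m'-j}P_{m'}^{-1}$, which only tends to $1$; replacing exact invariance by uniform convergence on the fixed polynomial is a trivial repair.

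The genuine gap is part (ii). Your Baire machinery correctly reduces (ii) to producing witnesses $\phi_{k}$ with $\Vert\phi_{k}\Vert_{1}\to0$, $n_{k},m_{k}\to\infty$ and $\boldsymbol{\mu}\{|T_{n_{k},m_{k}}^{(\mathbf{q},\mathbf{p})}\phi_{k}|>\varepsilon_{0}\}\ge\varepsilon_{0}$, but you never produce them, and this construction is the entire content of the theorem --- it is where all of the paper's work in Section 3 lies. Moreover, the sketch you offer starts from an impossible premise: you propose indices $n_{k}=2^{A_{k}}$, $m_{k}=2^{B_{k}}$ ``along which $L_{n_{k}}^{(\mathbf{q})},L_{m_{k}}^{(\mathbf{p})}\to\infty$''. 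No such subsequence exists: for $n=2^{A}$ the sum in \eqref{O2} has a single term, so $F_{2^{A},1}^{(\mathbf{q})}=D_{2^{A}}$ and $L_{2^{A}}^{(\mathbf{q})}\le\Vert D_{2^{A}}\Vert_{1}+\Vert F_{2^{A},2}^{(\mathbf{q})}\Vert_{1}\le 1+c$ uniformly in $A$; the Lebesgue constants blow up only along integers with many spread-out binary digits. This is exactly why the paper takes $n_{k}=2^{2k}+2^{2k-2}+\cdots+2^{0}$ and the single tensor kernel $f_{k}=D_{2^{2k+1}}\otimes D_{2^{2k+1}}$, proves the pointwise lower bound $c\,2^{a+b}$ for $|f_{k}\ast(F_{n_{k},1}^{(\mathbf{q})}\otimes F_{n_{k},1}^{(\mathbf{p})})|$ on the dyadic rectangles $(I_{a}\setminus I_{a+1})\times(I_{b}\setminus I_{b+1})$ with $a,b$ running over ranges of lengths $\gamma_{k}(\mathbf{q}),\gamma_{k}(\mathbf{p})$ (these tend to infinity precisely because $\beta(\mathbf{q})=\beta(\mathbf{p})=0$, via Lemma \ref{QQ} and \eqref{l1}), and deduces $\boldsymbol{\mu}(K_{k})\ge c\,\gamma_{k}(\mathbf{p})2^{-\lambda_{k}}$ with $\lambda_{k}=4k-\gamma_{k}(\mathbf{q})$. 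Even granting such an estimate, a further ingredient absent from your plan is needed: the exceptional set $K_{k}$ is tiny, and the paper converts it into a set of measure $\ge 1/8$ for a unit-ball function $\xi_{k}$ by covering with translates (\cite{goginava2019subsequences}) and applying Stein's randomization with Rademacher signs \cite{stein1961limits}, after which \cite[Lemma 1]{GoginavaMeasureGMJ} applies. Your alternative idea of a constructively aligned sum $\sum_{j}\alpha_{j}\,u_{j}\otimes v_{j}$ with $L^{2}$-near-orthogonality is left entirely unexecuted, so as it stands the proposal does not prove (ii).
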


\section{Proof of Theorem \protect\ref{T1}}

In this section, we are going to prove Theorem \ref{T1}. Before, to proceed
to do it, we first provide necessary definitions and auxiliary facts.

By $\mathbb{N}$ the set of non-negative integers is denoted. Given $k\in
\mathbb{N}$ and $x\in \mathbb{I},$ let $I_{k}(x)$ denote the dyadic interval
of length $2^{-k}$ which contains the point $x$. Also, we use the notation $%
I_{n}:=I_{n}\left( 0\right) \left( n\in \mathbb{N}\right) ,\overline{I}%
_{k}\left( x\right) :=\mathbb{I}\backslash I_{k}\left( x\right) $. Let
\begin{equation*}
x=\sum\limits_{n=0}^{\infty }x_{n}2^{-\left( n+1\right) }:=\left(
x_{0},x_{1},\dots\right)
\end{equation*}
be the dyadic expansion of $x\in \mathbb{I}$, where $x_{n}=0$ or $1$ and if $%
x$ is a dyadic rational number we choose the expansion which terminate in $%
0^{\prime }$s. In the sequel, we also frequently employ the notation $%
I_{k}(x)=I_{k}\left( x_{0},x_{1},\dots,x_{k-1}\right)$.

Given any $n\in \mathbb{N}$, one can write $n$ uniquely as
\begin{equation*}
n=\sum\limits_{k=0}^{\infty }\varepsilon _{k}\left( n\right) 2^{k}:=\left(
\varepsilon _{0}\left( n\right) ,\dots ,\varepsilon _{k}\left( n\right)
,\dots \right),
\end{equation*}
where $\varepsilon _{k}\left( n\right)\in\{0,1\}$, for $k\in \mathbb{N}$.
This expression is called \textit{the binary expansion} of $n$ and the
numbers $\varepsilon _{k}\left( n\right) $ are referred \textit{the binary
coefficients} of $n$. Furthermore, one denotes $\left\vert n\right\vert
:=\max \{j\in \mathbb{N}: \ \varepsilon _{j}\left( n\right) \neq 0\}$, that
is $2^{\left\vert n\right\vert }\leq n<2^{\left\vert n\right\vert +1}.$

We recall that the $n$th $\left( n\in \mathbb{N}\right) $ Walsh-Paley
function at point $x\in \mathbb{I}$ is defined by
\begin{equation*}
w_{n}\left( x\right) =\left( -1\right) ^{\sum\limits_{j=0}^{\infty
}\varepsilon _{j}\left( n\right) x_{j}}.
\end{equation*}%
By $\dotplus $ we denote the logical addition on $\mathbb{I}$, i.e.
\begin{equation*}
x\dotplus y:=\sum\limits_{n=0}^{\infty }\left\vert x_{n}-y_{n}\right\vert
2^{-\left( n+1\right) }
\end{equation*}
where $x,y\in \mathbb{I}$.

The Walsh-Dirichlet kernel is defined by $D_{n}:=\sum%
\limits_{k=0}^{n-1}w_{k}.$ It is well-known that \cite[p. 7]{SWS}
\begin{equation}
D_{2^{n}}\left( x\right) =2^{n}\chi _{I_{n}}\left( x\right) ,  \label{Dir}
\end{equation}%
where $\chi _{E}$ is the indicator function of a set $E$. Set $D_{n}^{\ast
}:=w_{n}D_{n}$.

In this section, the one dimensional Lebesgue measure of a set $A\subset
\mathbb{I}$ will be denoted by $\mu \left( A\right) $.

\begin{proof}[Proof of Theorem \protect\ref{T1}]
For every $l<k$, we define%
\begin{equation}
\delta \left( l,k\right) :=\max \left\{ \pi _{s}:k-l\leq s\leq k\right\} ,
\label{delta}
\end{equation}%
where $\pi _{s}$ is given by \eqref{eps}.

We observe that Lemma \ref{QQ} implies the sequence $\{\pi _{k}\}$ tends to
zero; however, this convergence might not be monotonic.

The new sequence has the following property
\begin{equation*}
\delta \left( l+1,k\right) \geq \delta \left( l,k\right) ,\quad l=0,\dots
,k-1.
\end{equation*}%
Now, let us define
\begin{equation*}
\gamma _{k}:=\min \left\{ 0\leq l\leq k:\delta \left( l+1,k\right) >\frac{1}{%
l+1}\right\} .
\end{equation*}%
We know that the sequence $\left\{ \frac{1}{l+1}:0\leq l<k\right\} $ is
decreasing, while $\left\{ \delta \left( l,k\right) :\ 0\leq l<k\right\} $
is increasing, therefore, if they "meet", then $\gamma _{k}$ would be such a
"meeting" point. In that case, one has
\begin{equation*}
\gamma _{k}\uparrow \infty \text{ as }k\rightarrow \infty ,
\end{equation*}%
\begin{equation*}
\delta \left( \gamma _{k}+1,k\right) >\frac{1}{1+\gamma _{k}}
\end{equation*}%
and%
\begin{equation}
\delta \left( \gamma _{k},k\right) \leq \frac{1}{\gamma _{k}}.  \label{meet}
\end{equation}%
If those two sequences do not "meet" with each other, then we let $\gamma
_{k}:=\left[ k/2\right] $. Hence,
\begin{equation*}
\delta \left( \gamma _{k},k\right) <\frac{1}{\gamma _{k}}.
\end{equation*}%
Now, we can write%
\begin{eqnarray*}
Q_{2^{k}} &=&\left( 1+\pi _{k}\right) Q_{2^{k-1}}=\cdots  \\
&=&\left( 1+\pi _{k}\right) \cdots \left( 1+\pi _{k-\gamma _{k}+1}\right)
Q_{2^{k-\gamma _{k}}}.
\end{eqnarray*}%
From (\ref{delta}) and (\ref{meet}) one finds
\begin{eqnarray*}
\ln \left[ \left( 1+\pi _{k}\right) \cdots \left( 1+\pi _{k-\gamma
_{k}+1}\right) \right]  &\sim &\pi _{k}+\cdots +\pi _{k-\gamma _{k}+1} \\
&\leq &c\delta \left( \gamma _{k},k\right) \gamma _{k}\leq c<\infty .
\end{eqnarray*}%
Consequently,%
\begin{equation}
Q_{2^{k-s}}\geq Q_{2^{k-\gamma _{k}}}\geq cQ_{2^{k}},\ \ s=1,\dots ,\gamma
_{k}.  \label{l1}
\end{equation}%
Without lost the generality, we may assume that $\gamma _{k}$ is an even
number, and for the sake of simplicity of the notation, we will use $2\gamma
_{k}$ instead of $\gamma _{k}$. Hence,%
\begin{equation}
Q_{2^{k-s}}\geq cQ_{2^{k}},\ \ s=1,\dots ,2\gamma _{k}.  \label{l2}
\end{equation}%
Set%
\begin{equation*}
\theta _{i}:=\left( u_{0},\dots ,u_{N-2\gamma _{N}-1},\underbrace{%
u_{N-2\gamma _{N}},\dots ,u_{N-\gamma _{N}-1}},\underbrace{u_{N-2\gamma
_{N}},\dots ,u_{N-\gamma _{N}-1}},0,\dots \right) ,
\end{equation*}%
where%
\begin{equation*}
i:=\sum\limits_{l=0}^{N-\gamma _{N}-1}u_{l}2^{N-\gamma _{N}-l-1}.
\end{equation*}%
Now, let us consider the Walsh Polynomial $P_{N}\left( t\right) $ defined by%
\begin{equation*}
P_{N}\left( t\right) :=\frac{1}{2^{N-\gamma _{N}}\sqrt{\gamma _{N}}}%
\sum\limits_{i=0}^{N-\gamma _{N}-1}D_{2^{N}}\left( t\dotplus \theta
_{i}\right) \sum\limits_{l=N-2\gamma _{N}}^{N-\gamma _{N}-1}w_{2^{l}}\left(
t\right) .
\end{equation*}%
From the orthogonality of Walsh functions, one finds
\begin{eqnarray}
\left\Vert P_{N}\right\Vert _{1} &\leq &\frac{2^{N}}{2^{N-\gamma _{N}}\sqrt{%
\gamma _{N}}}\sum\limits_{i=0}^{N-\gamma _{N}-1}\left\Vert \chi
_{I_{N}\dotplus \theta _{i}}\sum\limits_{l=N-2\gamma _{N}}^{N-\gamma
_{N}-1}w_{2^{l}}\right\Vert _{1}  \label{L1} \\
&=&\frac{1}{\sqrt{\gamma _{N}}}\sum\limits_{i=0}^{N-\gamma _{N}-1}\left\Vert
\chi _{I_{N}-\gamma _{N}\dotplus \theta _{i}}\sum\limits_{l=N-2\gamma
_{N}}^{N-\gamma _{N}-1}w_{2^{l}}\right\Vert _{1}  \notag \\
&=&\frac{1}{\sqrt{\gamma _{N}}}\left\Vert \sum\limits_{l=N-2\gamma
_{N}}^{N-\gamma _{N}-1}w_{2^{l}}\right\Vert _{1}  \notag \\
&\leq &\frac{1}{\sqrt{\gamma _{N}}}\left\Vert \sum\limits_{l=N-2\gamma
_{N}}^{N-\gamma _{N}-1}w_{2^{l}}\right\Vert _{2}=1.  \notag
\end{eqnarray}%
On the other hand,
\begin{equation}
\left\Vert P_{N}\right\Vert _{\infty }\leq c2^{\gamma _{N}}\sqrt{\gamma _{N}}%
.  \label{est22}
\end{equation}

Denote%
\begin{equation*}
E^{\prime }:=\overline{\lim\limits_{N\rightarrow \infty }}E_{N},
\end{equation*}%
where
\begin{equation*}
E_{N}:=\bigcup\limits_{a_{0}=0}^{1}\cdots \bigcup\limits_{a_{_{N-\gamma
_{N}-1}}=0}^{1}I_{_{N-\gamma _{N}+1}}\left( a_{0},...,a_{_{N-\gamma
_{N}-1}},1-a_{_{N-2\gamma _{N}}}\right) ,
\end{equation*}%
We can observe that
\begin{equation*}
\mu \left( E^{\prime }\right) =1.
\end{equation*}

Let $x\in E^{\prime }$. Then, we choose a strictly increasing sequence $%
\left\{ N_{k}:k\geq 1\right\} $ which enjoys the following conditions:
\begin{equation}
N_{k}-2\gamma _{N_{k}}>N_{k-1}  \label{Nk>}
\end{equation}%
\begin{equation*}
x\in E_{N_{k}},\ k=1,2,\dots
\end{equation*}%
\begin{equation}
\gamma _{N_{k}}\geq k^{8},  \label{2}
\end{equation}%
\begin{equation}
\sum\limits_{j=1}^{k-1}\sqrt[4]{\gamma _{N_{j}}}2^{\gamma _{N_{j}}}<\frac{%
\sqrt[4]{\gamma _{N_{k}}}}{k}.  \label{3}
\end{equation}%
In the subsequent discussion, to simplify matters, we employ $\gamma _{k}$
instead of $\gamma _{N_{k}}$.

Denote
\begin{eqnarray}
n\left( N_{k},x\right) &:&=\underbrace{\varepsilon _{N_{k}-2\gamma
_{k}}\left( x\right) 2^{N_{k}-2\gamma _{k}}+\cdots +\varepsilon
_{N_{k}-\gamma _{k}-1}\left( x\right) 2^{N_{k}-\gamma _{k}-1}}%
_{\lambda_{N_{k}}}  \label{nu} \\
&&+\varepsilon _{N_{k}-2\gamma _{k}}\left( x\right) 2^{N_{k}-\gamma
_{k}}+\cdots +\varepsilon _{N_{k}-\gamma _{k}-1}\left( x\right) 2^{N_{k}-1}
\notag \\
&:&=\lambda _{N_{k}}+2^{\gamma _{k}}\lambda _{N_{k}},  \notag
\end{eqnarray}%
where $\varepsilon _{k}\left( x\right) $ will be determined later.

It is easily seen that the operators $T_{n}^{\left( \mathbf{q}\right) }(f)$
can be represented as follows:
\begin{equation}  \label{O}
T_{n}^{\left( \mathbf{q}\right) }(f,x)=\left( f\ast F_{n}^{\left( \mathbf{q}%
\right) }\right) \left( x\right) ,
\end{equation}%
where%
\begin{equation*}
{F}_{n}^{\left( \mathbf{q}\right) }(t):=\frac{1}{Q_{n}}%
\sum_{k=1}^{n}q_{n-k}D_{k}(t).
\end{equation*}

Following \cite{goginava2023some} we may write
\begin{equation}
{F}_{n}^{\left( \mathbf{q}\right) }=F_{n,1}^{\left( \mathbf{q}\right)
}+F_{n,2}^{\left( \mathbf{q}\right) },\ \ \ n\in \mathbb{N},  \label{O1}
\end{equation}%
where
\begin{eqnarray}
&&F_{n,1}^{\left( \mathbf{q}\right) }:=\frac{w_{n}}{Q_{n}}\sum_{j=1}^{\infty
}\varepsilon _{j}\left( n\right) Q_{n\left( j\right) }w_{2^{j}}D_{2^{j}},
\label{O2} \\
&&F_{n,2}^{\left( \mathbf{q}\right) }:=-\frac{w_{n}}{Q_{n}}%
\sum_{j=1}^{r}\varepsilon _{j}\left( n\right) w_{n\left( j\right)
}w_{2^{j}-1}\sum_{k=1}^{2^{j}-1}q_{k+n\left( j-1\right) }D_{k}  \notag
\end{eqnarray}%
and
\begin{equation*}
n\left( s\right) :=\sum\limits_{j=0}^{s}\varepsilon _{j}\left( n\right)
2^{j},\ \ s\in \mathbb{N}.
\end{equation*}

The representation (\ref{O1}) together with (\ref{O}) allows the operator $%
T_{n}^{\left( \mathbf{q}\right) }(f)$ to write a sum of two operators $f\ast
F_{n,1}^{\left( \mathbf{q}\right) }$ and $f\ast F_{n,2}^{\left( \mathbf{q}%
\right) }$. We notice that $f\ast F_{n,1}^{\left( \mathbf{q}\right) }$ is a
martingale transformation, and the second one $f\ast F_{n,2}^{\left( \mathbf{%
q}\right) }$ has the "good" property that the mapping $f\in L_{1}(\mathbb{I}%
)\rightarrow \sup\limits_{n\in \mathbb{N}}\left\vert f\ast F_{n,2}^{\left(
\mathbf{q}\right) }\right\vert $ has of weak type $\left( L_{1},L_{1}\right)
$ (see \cite{goginava2023some}). Namely, there exists an absolute constant $c>0
$ such that%
\begin{equation*}
\left\Vert f\ast F_{n,2}^{\left( \mathbf{q}\right) }\right\Vert _{1,\infty
}:=\sup\limits_{t>0}t\mu \left\{ x\in \mathbb{I}:\sup\limits_{n\in \mathbb{N}%
}\left\vert f\ast F_{n,2}^{\left( \mathbf{q}\right) }\right\vert >t\right\}
\leq c\left\Vert f\right\Vert _{1}\text{ \ }\left( f\in L_{1}\right) \text{,}
\end{equation*}%
which implies
\begin{equation}
\sup\limits_{n}\left\vert f\ast {F}_{n,2}^{\left( \mathbf{q}\right)
}\right\vert <\infty   \label{ae}
\end{equation}%
a. e. for $f\in L_{1}\left( \mathbb{I}\right) .$

Therefore, to prove the theorem, it is enough to establish the divergence of
the sequence $f\ast {F}_{n,1}^{\left( \mathbf{q}\right) }$.

Put
\begin{equation*}
F_{n\left( N_{k},x\right) ,1}^{\left( \mathbf{q}\right) \ast }:=w_{n\left(
N_{k},x\right) }F_{n\left( N_{k},x\right) ,1}^{\left( \mathbf{q}\right) }.
\end{equation*}%
Due to
\begin{equation*}
\left( x\dotplus t\right) _{N_{k}-2\gamma _{k}}\dotplus \left( x\dotplus
t\right) _{N_{k}-\gamma _{k}}=1,\ \ t\in \mathop{supp}\left(
P_{N_{k}}\right) ,\ x\in E^{\prime },
\end{equation*}%
by (\ref{Dir}) and (\ref{nu}) we infer that
\begin{equation*}
F_{n\left( N_{k},x\right) ,1}^{\left( \mathbf{q}\right) \ast }\left(
x\dotplus t\right) =\frac{Q_{\lambda _{N_{k}}}}{Q_{n\left( N_{k},x\right) }}%
F_{\lambda _{N_{k}},1}^{\left( \mathbf{q}\right) \ast }\left( x\dotplus
t\right) .
\end{equation*}%
Consequently,{\small
\begin{eqnarray}
P_{N_{k}}\ast F_{n\left( N_{k},x\right) ,1}^{\left( \mathbf{q}\right) \ast }
&=&\frac{1}{2^{N_{k}-\gamma _{k}}\sqrt{\gamma _{k}}}\sum%
\limits_{i=0}^{N_{k}-\gamma _{k}-1}\sum\limits_{l=N_{k}-2\gamma
_{k}}^{N_{k}-\gamma _{k}-1}w_{2^{l}}\left( x\right)   \label{lower} \\
&&\times D_{2^{N_{k}}}\left( \cdot \dotplus \theta _{i}\right) \ast \left(
w_{2^{l}}F_{n\left( N_{k},x\right) ,1}^{\left( \mathbf{q}\right) \ast
}\right)   \notag \\
&=&\frac{1}{2^{N_{k}-\gamma _{k}}\sqrt{\gamma _{k}}}\sum%
\limits_{i=0}^{N_{k}-\gamma _{k}-1}\sum\limits_{l=N_{k}-2\gamma
_{k}}^{N_{k}-\gamma _{k}-1}w_{2^{l}}\left( x\right)   \notag \\
&&\times \left( w_{2^{l}}\left( \cdot \dotplus \theta _{i}\right) F_{n\left(
N_{k},x\right) ,1}^{\left( \mathbf{q}\right) \ast }\left( \cdot \dotplus
\theta _{i}\right) \right) \ast D_{2^{N_{k}}}  \notag \\
&=&\frac{1}{\sqrt{\gamma _{k}}}\sum\limits_{l=N_{k}-2\gamma
_{k}}^{N_{k}-\gamma _{k}-1}w_{2^{l}}\left( x\right)   \notag \\
&&\times \frac{1}{Q_{n\left( N_{k},x\right) }}\frac{1}{2^{N_{k}-\gamma _{k}}}%
\sum\limits_{i=0}^{N_{k}-\gamma _{k}-1}w_{2^{l}}\left( x\dotplus \theta
_{i}\right) Q_{\lambda _{N_{k}}}F_{\lambda _{N_{k}},1}^{\left( \mathbf{q}%
\right) \ast }\left( x\dotplus \theta _{i}\right) .  \notag
\end{eqnarray}%
}{\normalsize The equality }{\small
\begin{equation*}
w_{2^{l}}\left( x\dotplus \theta _{i}\right) Q_{\lambda _{N_{k}}}F_{\lambda
_{N_{k}},1}^{\left( \mathbf{q}\right) \ast }\left( x\dotplus \theta
_{i}\right) =2^{N_{k}-\gamma _{k}}\int\limits_{I_{N_{k}-\gamma _{k}}\left(
\theta _{i}\right) }w_{2^{l}}\left( x\dotplus t\right) Q_{\lambda
_{N_{k}}}F_{\lambda _{N_{k}},1}^{\left( \mathbf{q}\right) \ast }\left(
x\dotplus t\right) dt
\end{equation*}%
}{\normalsize and (\ref{O2}) together with orthogonality of the Walsh
functions imply
\begin{eqnarray*}
&&\sum\limits_{i=0}^{N_{k}-\gamma _{k}-1}w_{2^{l}}\left( x\dotplus \theta
_{i}\right) Q_{\lambda _{N_{k}}}F_{\lambda _{N_{k}},1}^{\left( \mathbf{q}%
\right) \ast }\left( x\dotplus \theta _{i}\right)  \\
&=&2^{N_{k}-\gamma _{k}}\sum\limits_{i=0}^{N_{k}-\gamma
_{k}-1}\int\limits_{I_{N_{k}-\gamma _{k}}\left( \theta _{i}\right)
}w_{2^{l}}\left( x\dotplus t\right) Q_{\lambda _{N_{k}}}F_{\lambda
_{N_{k}},1}^{\left( \mathbf{q}\right) \ast }\left( x\dotplus t\right) dt \\
&=&2^{N_{k}-\gamma _{k}}\int\limits_{\mathbb{I}}w_{2^{l}}\left( x\dotplus
t\right) Q_{\lambda _{N_{k}}}F_{\lambda _{N_{k}},1}^{\left( \mathbf{q}%
\right) \ast }\left( x\dotplus t\right) dt \\
&=&2^{N_{k}-\gamma _{k}}\int\limits_{\mathbb{I}}w_{2^{l}}\left( t\right)
Q_{\lambda _{N_{k}}}F_{\lambda _{N_{k}},1}^{\left( \mathbf{q}\right) \ast
}\left( t\right) dt \\
&=&2^{N_{k}-\gamma _{k}}\sum\limits_{j=N_{k}-2\gamma _{k}}^{N_{k}-\gamma
_{k}-1}\varepsilon _{j}\left( x\right) Q_{\lambda _{N_{k}\left( j\right)
}}\int\limits_{\mathbb{I}}w_{2^{l}}\left( t\right) w_{2^{j}}\left( t\right)
D_{2^{j}}\left( t\right) dt \\
&=&2^{N_{k}-\gamma _{k}}\varepsilon _{l}\left( x\right) Q_{\lambda
_{N_{k}\left( l\right) }}.
\end{eqnarray*}%
}

{\normalsize Consequently, from (\ref{lower}) it follows that
\begin{equation*}
P_{N_{k}}\ast F_{n\left( N_{k},x\right) ,1}^{\left( \mathbf{q}\right) \ast }=%
\frac{1}{Q_{n\left( N_{k},x\right) }\sqrt{\gamma _{k}}}\sum%
\limits_{l=N_{k}-2\gamma _{k}}^{N_{k}-\gamma _{k}-1}w_{2^{l}}\left( x\right)
\varepsilon _{l}\left( x\right) Q_{\lambda_{N_{k}\left( l\right) }}.
\end{equation*}
}

{\normalsize Let us first consider the case $x_{N_{k}-2\gamma _{k}}+\cdots
+x_{N_{k}-\gamma _{k}-1}<\frac{\gamma _{k}}{3}$, and define digits $%
\varepsilon _{k}\left( x\right) $ by $\varepsilon _{k}\left( x\right)
=1-x_{k}$. Then, by (\ref{l2}) one finds
\begin{eqnarray}
\left\vert P_{N_{k}}\ast F_{n\left( N_{k},x\right) ,1}^{\left( \mathbf{q}%
\right) \ast }\left( x\right) \right\vert &\geq &\frac{c}{Q_{2^{N_{k}}}\sqrt{%
\gamma _{k}}}\sum\limits_{l=N_{k}-2\gamma _{k}}^{N_{k}-\left[ \left(
5/3\right) \gamma _{k}\right] -1}Q_{2^{l}}  \label{ll} \\
&\geq &\frac{cQ_{2^{N_{k}-2\gamma _{k}}}\gamma _{k}}{Q_{2^{N_{k}}}\sqrt{%
\gamma _{k}}}  \notag \\
&\geq &c\sqrt{\gamma _{k}}.  \notag
\end{eqnarray}
}

{\normalsize Now, assume that $x_{N_{k}-2\gamma _{k}}+\cdots
+x_{N_{k}-\gamma _{k}-1}\geq \frac{\gamma _{k}}{3}$. The digits $\varepsilon
_{k}\left( x\right) $ are defined by $\varepsilon _{k}\left( x\right) =x_{k}.
$ Analogously, we establish
\begin{equation}
\left\vert P_{N_{k}}\ast F_{n\left( N_{k},x\right) ,1}^{\left( \mathbf{q}%
\right) \ast }\left( x\right) \right\vert \geq c\sqrt{\gamma _{k}}.
\label{b}
\end{equation}%
Combining (\ref{ll}) and (\ref{b}), we obtain
\begin{equation}
\left\vert P_{N_{k}}\ast F_{n\left( N_{k},x\right) ,1}^{\left( \mathbf{q}%
\right) \ast }\left( x\right) \right\vert \geq c\sqrt{\gamma _{k}}\ \ \text{
for a.e. }\ \ x\in \mathbb{I}\text{.}  \label{a+b}
\end{equation}%
Using the equalities (see (\ref{nu}))%
\begin{equation*}
w_{n\left( N_{k},x\right) }\left( t\right) =\left( -1\right) ^{\varepsilon
_{N_{k}-2\gamma _{k}}\left( t_{N_{k}-2\gamma _{k}}+t_{N_{k}-\gamma
_{k}}\right) +\cdots +\varepsilon _{N_{k}-2\gamma _{k}}\left(
t_{N_{k}-\gamma _{k}-1}+t_{N_{k}-1}\right) }
\end{equation*}%
and%
\begin{equation*}
t_{N_{k}-2\gamma _{k}+j}=t_{N_{k}-\gamma _{k}+j},\ \ j=0,\dots,\gamma
_{k}-1,t\in \text{supp}\left( P_{N_{k}}\right)
\end{equation*}%
we infer that $w_{n\left( N_{k},x\right) }\left( t\right) P_{N_{k}}\left(
t\right) =P_{N_{k}}\left( t\right) $. Hence, (\ref{b}) yields
\begin{equation}
\left\vert P_{N_{k}}\ast F_{n\left( N_{k},x\right) ,1}^{\left( \mathbf{q}%
\right) }\left( x\right) \right\vert \geq c\sqrt{\gamma _{k}}\ \ \text{ for
a.e. }\ \ x\in \mathbb{I}.  \label{nostar}
\end{equation}
}

{\normalsize Let us define
\begin{equation*}
f\left( t\right) :=\sum\limits_{j=1}^{\infty }\frac{P_{N_{j}}\left( t\right)
}{\sqrt[4]{\gamma _{j}}}.
\end{equation*}%
By (\ref{L1}) and (\ref{2}) we conclude that $f\in L_{1}(\mathbb{I})$. }

{\normalsize Let $j>k.$ From (\ref{Nk>}) and the orthogonality of the Walsh
functions it follows that}{\small
\begin{eqnarray}  \label{jk}
P_{N_{j}}\ast F^{\left( \mathbf{q}\right) }_{n\left( N_{k},x\right) ,1} &=&%
\frac{1}{2^{-\gamma _{j}}\sqrt{\gamma _{j}}}\sum\limits_{i=0}^{N_{j}-\gamma
_{j}-1}\int\limits_{I_{N_{j}}\left( x\dotplus \theta _{i}\right)
}\sum\limits_{l=N_{j}-2\gamma _{j}}^{N_{j}-\gamma _{j}-1}w_{2^{l}}\left(
x\dotplus t\right) F^{\left( \mathbf{q}\right) }_{n\left( N_{k},x\right)
,1}\left( t\right) dt \\[2mm]
&=&\frac{1}{\sqrt{\gamma _{j}}}\sum\limits_{i=0}^{N_{j}-\gamma
_{j}-1}\int\limits_{I_{N_{j}-\gamma _{j}}\left( x\dotplus \theta _{i}\right)
}\sum\limits_{l=N_{j}-2\gamma _{j}}^{N_{j}-\gamma _{j}-1}w_{2^{l}}\left(
x\dotplus t\right) F^{\left( \mathbf{q}\right) }_{n\left( N_{k},x\right)
,1}\left( t\right) dt  \notag \\
&=&\frac{1}{\sqrt{\gamma _{j}}}\int\limits_{\mathbb{I}}\sum%
\limits_{l=N_{j}-2\gamma _{j}}^{N_{j}-\gamma _{j}-1}w_{2^{l}}\left(
x\dotplus t\right) F^{\left( \mathbf{q}\right) }_{n\left( N_{k},x\right)
,1}\left( t\right) dt=0.  \notag
\end{eqnarray}
}

{\normalsize Now, we assume that $j<k$, then {\small
\begin{eqnarray*}
P_{N_{j}}\ast F^{\left( \mathbf{q}\right) }_{n\left( N_{k},x\right) ,1} &=&%
\underbrace{\frac{1}{Q_{n\left( N_{k},x\right) }}\sum\limits_{s=N_{k}-2%
\gamma _{k}}^{N_{k}-\gamma _{k}-1}\varepsilon _{s}\left( x\right) Q_{n\left(
N_{k},x\right) \left( s\right) }\left( P_{N_{j}}\ast w_{n\left(
N_{k},x\right) }D_{2^{s}}^{\ast }\right) }_{I} \\
&&+\underbrace{\frac{1}{Q_{n\left( N_{k},x\right) }}\sum\limits_{s=N_{k}-%
\gamma _{k}}^{N_{k}-1}\varepsilon _{s-\gamma _{k}}\left( x\right) Q_{\left(
n\left( N_{k},x\right) \right) \left( s\right) }\left( P_{N_{j}}\ast
w_{n\left( N_{k},x\right) }D_{2^{s}}^{\ast }\right) }_{II}.
\end{eqnarray*}%
} }

{\normalsize The binary expansion of $n\left( N_{k},x\right) $ contains
binary coefficients in two blocks, where the binary coefficients are equal
in the first and second blocks. If in the first block, and therefore, in the
second block all the coefficients are zero then $I=II=0$. }

{\normalsize Now, if there is at least one $s_{0}$ in the first block such
that $\varepsilon _{s_{0}}\left( x\right) =1$, then
\begin{equation*}
w_{2^{s_{0}}+2^{s_{0}+\gamma _{k}}}D_{2^{s_{0}}}^{\ast }=w_{2^{s_{0}+\gamma
_{k}}}D_{2^{s_{0}}}.
\end{equation*}%
Hence,
\begin{equation*}
P_{N_{j}}\ast w_{2^{s_{0}+\gamma _{k}}}D_{2^{s_{0}}}=0, \ \ \left(
j<k,s_{0}\in \left\{ N_{k}-2\gamma _{k},\dots,N_{k}-\gamma _{k}-1\right\}
\right)
\end{equation*}%
and%
\begin{equation}
I=0.  \label{I}
\end{equation}%
Let%
\begin{equation*}
\overline{s}:=\max \left\{ s:\varepsilon _{s}=1,N_{k}-2\gamma _{k}\leq
s<N_{k}-\gamma _{k}\right\} .
\end{equation*}%
Then%
\begin{eqnarray}
II &=&\frac{Q_{n\left( N_{k},x\right) \left( \overline{s}+\gamma _{k}\right)
}}{Q_{n\left( N_{k},x\right) }}\left( P_{N_{j}}\ast D_{2^{\overline{s}%
+\gamma _{k}}}\right)  \label{ii} \\
&=&\frac{Q_{n\left( N_{k},x\right) \left( \overline{s}+\gamma _{k}\right) }}{%
Q_{n\left( N_{k},x\right) }}P_{N_{j}}.  \notag
\end{eqnarray}
}

{\normalsize Combining (\ref{est22}),  (\ref{I}) and (\ref{ii}),
it then follows from (\ref{3}) that
\begin{equation}
\left\vert \sum\limits_{j=1}^{k-1}\frac{\left( P_{N_{j}}\ast F^{\left(
\mathbf{q}\right) }_{n\left( N_{k},x\right) ,1}\right) }{\sqrt[4]{\gamma _{j}%
}}\right\vert \leq \sum\limits_{j=1}^{k-1}\frac{\left\vert
P_{N_{j}}\right\vert }{\sqrt[4]{\gamma _{j}}}\leq \sum\limits_{j=1}^{k-1}%
\sqrt[4]{\gamma _{j}}2^{\gamma _{j}}<\frac{\sqrt[4]{\gamma _{k}}}{k}.
\label{<}
\end{equation}
}

{\normalsize From (\ref{nostar}), (\ref{jk}) and (\ref{<}) we infer that%
\begin{eqnarray}  \label{low}
\left\vert f\ast F^{\left( \mathbf{q}\right) }_{n\left( N_{k},x\right)
,1}\right\vert &\geq &\frac{\left\vert P_{N_{k}}\ast F^{\left( \mathbf{q}%
\right) }_{n\left( N_{k},x\right) ,1}\right\vert }{\sqrt[4]{\gamma _{j}}}%
-\left\vert \sum\limits_{j=1}^{k-1}\frac{\left( P_{N_{j}}\ast F^{\left(
\mathbf{q}\right) }_{n\left( N_{k},x\right) ,1}\right) }{\sqrt[4]{\gamma _{j}%
}}\right\vert  \label{ii555}\\
&\geq &c\sqrt[4]{\gamma _{k}}.  \notag
\end{eqnarray}
where, as before $x\in E^{\prime }$. }

{\normalsize Now, taking into account (\ref{O}), (\ref{O1}), (\ref{ae}) and (%
\ref{ii555}) one yields
\begin{equation*}
\sup\limits_{n}\left\vert T_{n}^{\left( \mathbf{q}\right) }(f;x)\right\vert
=\infty \text{ \ \ \ }
\end{equation*}%
almost everywhere. This completes the proof. }
\end{proof}

\begin{remark}
In Hardy's monograph \cite{hardy2000divergent}, the necessary and sufficient
conditions for the convergence of one sequence of operators, ensuring the
convergence of another sequence of operators, are explored. In particular,
it is established that the condition
\begin{equation}
\sup\limits_{n}\left( 2^{n}/Q_{2^{n}}\right) \leq c<\infty  \label{H}
\end{equation}%
ensures that the convergence of Fej\'er means implies the convergence of the
operators (\ref{seq}). 
Deriving the well-known theorem regarding the almost everywhere convergence
of Fej\'er means, we find that this condition guarantees (\ref{H}) the almost
everywhere convergence of the sequence of operators (\ref{seq}) for every
integrable function. By Theorem \ref{T1} we infer that the condition $\frac{%
2^{n}q_{2^{n}}}{Q_{2^{n}}}\geq c>0$ is necessary and sufficient for the
almost everywhere convergence of the sequence of operators $\{T_{n}^{\left(
\mathbf{q}\right) }\left( f\right)\}$ for all integrable functions.
\end{remark}

\section{Proof of Theorem \protect\ref{convmeas}}

\begin{proof}[Proof of Theorem \protect\ref{convmeas}.]
(i). Assume that $\beta \left( \mathbf{q}\right) +\beta \left( \mathbf{p}%
\right) >0$, then at least one either $\beta \left( \mathbf{q}\right) $ or $%
\beta \left( \mathbf{p}\right) $ is not equal to zero. Without loss of the
generality, we may assume that $\beta \left( \mathbf{p}\right) >0$.
Therefore, $\rho \left( \mathbf{p}\right) <\infty $ which yields $%
\sup\limits_{n}L_{n}^{\left( \mathbf{p}\right) }<\infty $. Hence, using %
\eqref{Lq}, we infer that the operator $T_{m}^{\left( \mathbf{p}\right) }$
is bounded from $L_{1}$ to $L_{1}.$ On the other hand, the uniform
boundedness of the sequence of operators $\{T_{n}^{\left( \mathbf{q}\right)
}\}$ from $L_{1}$ to $L_{1,\infty }$ has been established in \cite%
{goginava2023some}. Now, considering two-parametric operators obtained as a
result of the tensor product of one-dimensional ones, i.e. $T_{n}^{\left(
\mathbf{q}\right) }\otimes T_{n}^{\left( \mathbf{p}\right) }$, and employing
the well-known iteration method \cite[Ch. 17]{zygmund2002trigonometric}, we
can prove the following inequality $\left\Vert f\ast \left( T_{n}^{\left(
\mathbf{q}\right) }\otimes T_{n}^{\left( \mathbf{p}\right) }\right)
\right\Vert _{1,\infty }\leq c\left\Vert f\right\Vert _{1}$ and using the
standard method, we arrive at the desired assertion. \newline

(ii). Let $\beta \left( \mathbf{q}\right) +\beta \left( \mathbf{p}\right)=0$%
, then $\beta \left( \mathbf{q}\right)=\beta \left( \mathbf{p}\right)=0$.
This implies (iii) condition of Lemma \ref{QQ}. Therefore, by the proof of
Theorem \ref{T1}, one can find the sequences $\gamma_k(\mathbf{q})$ and $%
\gamma_k(\mathbf{p})$, associated with $\mathbf{q}$ and $\mathbf{p}$,
respectively. Notice that, these numbers enjoy the inequality \eqref{l1}.

Now, by employing the representations (\ref{O1}), (\ref{O2}), one finds
\begin{eqnarray}
T_{n,m}^{\left( \mathbf{q,p}\right) }\left( f,x,y\right) &=&f\ast \left(
F_{n,1}^{\left( \mathbf{q}\right) }\otimes F_{m,1}^{\left( \mathbf{p}\right)
}\right) \left( x,y\right)  \notag  \label{rep} \\
&&+f\ast \left( F_{n,1}^{\left( \mathbf{q}\right) }\otimes F_{m,2}^{\left(
\mathbf{p}\right) }\right) \left( x,y\right)  \notag \\
&&+f\ast \left( F_{n,2}^{\left( \mathbf{q}\right) }\otimes F_{m,1}^{\left(
\mathbf{p}\right) }\right) \left( x,y\right)  \notag \\
&&+f\ast \left( F_{n,2}^{\left( \mathbf{q}\right) }\otimes F_{m,2}^{\left(
\mathbf{p}\right) }\right) \left( x,y\right) .  \notag
\end{eqnarray}%
Following \cite{goginava2023some} we get
\begin{equation*}
\left\Vert f\ast F_{m,2}^{\left( \mathbf{p}\right) }\right\Vert _{1}\leq
c\left\Vert f\right\Vert _{1} \quad \left( f\in L_{1}\right) .
\end{equation*}%
Since $f\ast F_{m,1}^{\left( \mathbf{p}\right) }$ is martingale transform,
then by \cite{Burk} or \cite[Ch. 3]{SWS} one finds
\begin{equation*}
\left\Vert f\ast F_{m,1}^{\left( \mathbf{p}\right) }\right\Vert _{1,\infty
}\leq c\left\Vert f\right\Vert _{1}.
\end{equation*}%
Again, by the standard method of iteration of operators the following
inequalities can be established
\begin{equation}
\left\Vert f\ast \left( F_{n,2}^{\left( \mathbf{q}\right) }\otimes
F_{m,2}^{\left( \mathbf{p}\right) }\right) \right\Vert _{1}\leq c\left\Vert
f\right\Vert _{1},  \label{m1}
\end{equation}%
\begin{equation}
\left\Vert f\ast \left( F_{n,1}^{\left( \mathbf{q}\right) }\otimes
F_{m,2}^{\left( \mathbf{p}\right) }\right) \right\Vert _{1,\infty }\leq
c\left\Vert f\right\Vert _{1}  \label{m2}
\end{equation}%
and%
\begin{equation}
\left\Vert f\ast \left( F_{n,2}^{\left( \mathbf{q}\right) }\otimes
F_{m,1}^{\left( \mathbf{p}\right) }\right) \right\Vert _{1,\infty }\leq
c\left\Vert f\right\Vert _{1}.  \label{m3}
\end{equation}

Now, we consider the operator $f\ast F_{n,1}^{\left( \mathbf{q}\right)
}\otimes F_{n,1}^{\left( \mathbf{p}\right) }$. According to  \cite[Lemma 1]%
{GoginavaMeasureGMJ}, in order to prove Theorem \ref{convmeas}, it is enough
to show the existence of a sequence of functions $\{\xi _{k}\}_{k=1}^{\infty
}$ from the unit ball of $L_1(\mathbb{I}^2)$, the sequences of integers $%
\{n_{k}\}_{k=1}^{\infty }$ and an increasing sequence $\{\lambda
_{k}\}_{k=1}^{\infty }$ with $\lambda_k\to\infty$ such that%
\begin{equation*}
\inf_{k}\boldsymbol{\mu }\left\{\left( x,y\right) \in \mathbb{I}%
^{2}:\left\vert \xi _{k}\ast \left( F_{n_{k},1}^{\left( \mathbf{q}\right)
}\otimes F_{n_{k},1}^{\left( \mathbf{p}\right) }\right) \left( x,y\right)
\right\vert >\lambda _{k}\right\} >0.
\end{equation*}

For every $k\in\mathbb{N}$ we denote
\begin{equation*}
n_{k}:=2^{2k}+2^{2k-2}+\cdots +2^{2}+2^{0}
\end{equation*}%
and define%
\begin{equation*}
f_{k}\left( x,y\right) :=D_{2^{2k+1}}\left( x\right) D_{2^{2k+1}}\left(
y\right) .
\end{equation*}

Then, one can see that%
\begin{equation*}
F_{n_{k},1}^{\left( \mathbf{q}\right) }=\frac{w_{n_{k}}}{Q_{n_{k}}}%
\sum_{j=0}^{k}Q_{2^{2j}+2^{2j-2}+\cdots +2^{2}+2^{0}}w_{2^{2j}}D_{2^{2j}}.
\end{equation*}

So, we can write{\small
\begin{eqnarray}
\left\vert f_{k}\ast \left( F_{n_{k},1}^{\left( \mathbf{q}\right) }\otimes
F_{n_{k},1}^{\left( \mathbf{p}\right) }\right) \right\vert  &=&\left\vert
D_{2^{2K+1}}\ast \left( \frac{w_{n_{k}}}{Q_{n_{k}}}%
\sum_{j=0}^{k}Q_{2^{2j}+2^{2j-2}+\cdots
+2^{2}+2^{0}}w_{2^{2j}}D_{2^{2j}}\right) \right\vert   \label{ma} \\
&&\times \left\vert D_{2^{2k+1}}\ast \left( \frac{w_{n_{k}}}{P_{n_{k}}}%
\sum_{j=0}^{k}P_{2^{2j}+2^{2j-2}+\cdots
+2^{2}+2^{0}}w_{2^{2j}}D_{2^{2j}}\right) \right\vert   \notag \\
&=&\left\vert \frac{1}{Q_{n_{k}}}\sum_{j=0}^{k}Q_{2^{2j}+2^{2j-2}+\cdots
+2^{2}+2^{0}}w_{2^{2j}}D_{2^{2j}}\right\vert   \notag \\
&&\times \left\vert \frac{1}{P_{n_{k}}}\sum_{j=0}^{k}P_{2^{2j}+2^{2j-2}+%
\cdots +2^{2}+2^{0}}w_{2^{2j}}D_{2^{2j}}\right\vert .  \notag
\end{eqnarray}%
}{\normalsize We assume that $x\in I_{2l}\backslash I_{2l+1},l=0,1,\dots
,2k-1.$ Then from \eqref{Dir} it follows that }{\small
\begin{eqnarray}
\frac{1}{Q_{n_{k}}}\left\vert \sum_{j=0}^{k}Q_{2^{2j}+2^{2j-2}+\cdots
+2^{2}+2^{0}}w_{2^{2j}}D_{2^{2j}}\right\vert  &=&\frac{1}{Q_{n_{k}}}%
\left\vert \sum_{j=0}^{l}Q_{2^{2j}+2^{2j-2}+\cdots
+2^{2}+2^{0}}w_{2^{2j}}D_{2^{2j}}\right\vert   \notag  \label{211} \\
&=&\frac{1}{Q_{n_{k}}}\left\vert \sum_{j=0}^{l-1}Q_{2^{2j}+2^{2j-2}+\cdots
+2^{2}+2^{0}}2^{2j}-Q_{2^{2l}+2^{2j-2}+\cdots +2^{2}+2^{0}}2^{2l}\right\vert
.
\end{eqnarray}%
}{\normalsize Due to
\begin{equation*}
\sum_{j=0}^{l-1}Q_{2^{2j}+2^{2j-2}+\cdots +2^{2}+2^{0}}2^{2j}\leq \frac{%
2^{2l}}{3}Q_{2^{2\left( l-1\right) }+\cdots +2^{2}+2^{0}},
\end{equation*}%
right hand side of \eqref{211} can be estimated as follows
\begin{eqnarray}
RHS &\geq &\frac{1}{Q_{n_{k}}}\left( Q_{2^{2l}+2^{2j-2}+\cdots
+2^{2}+2^{0}}2^{2l}-\frac{2^{2l}}{3}Q_{2^{2\left( l-1\right) }+\cdots
+2^{2}+2^{0}}\right)   \label{2l} \\
&\geq &\frac{c2^{2l}Q_{2^{2l}}}{Q_{2^{2k}}}.  \notag
\end{eqnarray}%
Now, let $x\in I_{2l-1}\backslash I_{2l}$. Then }{\small
\begin{eqnarray}
\frac{1}{Q_{n_{k}}}\left\vert \sum_{j=0}^{k}Q_{2^{2j}+2^{2j-2}+\cdots
+2^{2}+2^{0}}w_{2^{2j}}D_{2^{2j}}\right\vert  &=&\frac{1}{Q_{n_{k}}}%
\sum_{j=0}^{l-1}2^{2j}Q_{2^{2j}+2^{2j-2}+\cdots +2^{2}+2^{0}}  \label{2l+1}
\\
&\geq &\frac{1}{Q_{n_{k}}}\sum_{j=0}^{l-1}2^{2l-2}Q_{2^{2\left( l-1\right)
}+\cdots +2^{2}+2^{0}}  \notag \\
&\geq &\frac{1}{4Q_{n_{k}}}2^{2l}Q_{2^{2\left( l-1\right) }}  \notag \\
&\geq &\frac{c2^{2l}Q_{2^{2l}}}{Q_{2^{2k}}}.  \notag
\end{eqnarray}%
}{\normalsize Hence, combining (\ref{2l}) and (\ref{2l+1}) yield
\begin{equation}
\frac{1}{Q_{n_{k}}}\left\vert \sum_{j=0}^{k}Q_{2^{2j}+2^{2j-2}+\cdots
+2^{2}+2^{0}}w_{2^{2j}}D_{2^{2j}}\right\vert \geq \frac{c2^{l}Q_{2^{l}}}{%
Q_{2^{2k}}},  \label{2112}
\end{equation}%
where $x\in I_{l}\backslash I_{l+1}$, $l=0,1,\dots ,2k-1.$ }

{\normalsize Let $\left( x,y\right) \in \left( I_{a}\backslash
I_{a+1}\right) \times \left( I_{b}\backslash I_{b+1}\right) $ for some $a\in
\left\{ 2k-\gamma _{k}\left( \mathbf{q}\right) ,\dots,2k-1\right\}$,\newline
$b\in \left\{ 2k-\gamma _{k}\left( \mathbf{p}\right) ,\dots,2k-1\right\} $.
Then, from (\ref{211}) and (\ref{2112}) we get
\begin{eqnarray}  \label{ab}
\left\vert f_{k}\ast \left( F_{n_{k},1}^{\left( q\right) }\otimes
F_{n_{k},1}^{\left( p\right) }\right) \right\vert &\geq &\frac{%
c2^{a}Q_{2^{a}}}{Q_{2^{2A}}}\frac{c2^{b}P_{2^{b}}}{Q_{2^{2k}}} \\
&\geq &c2^{a+b}.  \notag
\end{eqnarray}%
Without lost of generality, we may assume that $\gamma _{k}\left( \mathbf{q}%
\right) \geq \gamma _{k}\left( \mathbf{p}\right) $. Set $\lambda
_{k}:=4k-\gamma _{k}\left( \mathbf{q}\right) .$ }

{\normalsize Denote
\begin{equation*}
K_{k}:=\left\{ \left( x,y\right) :\left\vert f_{k}\ast \left(
F_{n_{k}}^{\left( \mathbf{q}\right) }\otimes F_{n_{k}}^{\left( \mathbf{p}%
\right) }\right) \left( x,y\right) \right\vert >2^{\lambda _{k}}\right\}.
\end{equation*}%
Then, we obtain
\begin{eqnarray}  \label{lo}
{\mu }(K_k)&\geq &c\sum\limits_{2k-\gamma _{k}\left(\mathbf{q}\right) \leq
a\leq 2k+\gamma _{k}\left( \mathbf{p}\right) -\gamma _{k}\left( \mathbf{q}%
\right) }\sum\limits_{b=\lambda _{k}-a}^{2k-1}\frac{1}{2^{a+b}} \\
&\geq &\frac{c\gamma _{k}\left( \mathbf{p}\right) }{2^{\lambda _{k}}}.
\notag
\end{eqnarray}%
}

{\normalsize \ Since \cite{goginava2019subsequences} there exists $\left(
x_{1},y_{1}\right) ,...,\left( x_{l\left( k\right) },y_{l\left( k\right)
}\right) \in \mathbb{I}^{2}$, $l\left( k\right) :=\left[ \frac{2^{\lambda
_{k}}/\gamma _{k}\left( \mathbf{p}\right) }{c}\right] +1,$ such that%
\begin{equation}
\left\vert \bigcup\limits_{j=1}^{l\left( k\right) }\left( K_{k}\dotplus
\left( x_{j},y_{j}\right) \right) \right\vert \geq c>0,  \label{1/2}
\end{equation}%
} {\normalsize Using the Stein's method \cite{stein1961limits} (see also (%
\cite[pp. 7-12]{garsia1970topics}), we can show that there exists $t_{0}\in
\mathbb{I}$, such that}{\small
\begin{equation*}
\boldsymbol{\mu }\left\{ \left( x,y\right) \in \mathbb{I}^{2}:\left\vert
\sum\limits_{j=1}^{l\left( k\right) }r_{j}\left( t_{0}\right) \left(
f_{k}\ast \left( F_{n_{k},1}^{\left( \mathbf{q}\right) }\otimes
F_{n_{k},1}^{\left( \mathbf{p}\right) }\right) \left( x\dotplus
x_{j},y\dotplus y_{j}\right) \right) \right\vert >2^{\lambda _{k}}\right\}
\geq \frac{1}{8}.
\end{equation*}%
Hence,
\begin{equation*}
\boldsymbol{\mu }\left\{ \left( x,y\right) \in \mathbb{I}^{2}:\frac{1}{%
l\left( k\right) }\left\vert \sum\limits_{j=1}^{l\left( k\right)
}r_{j}\left( t_{0}\right) \left( f_{k}\ast \left( F_{n_{k}}^{\left( \mathbf{q%
}\right) }\otimes F_{n_{k}}^{\left( \mathbf{p}\right) }\right) \left(
x\dotplus x_{j},y\dotplus y_{j}\right) \right) \right\vert >c\gamma
_{k}\left( \mathbf{p}\right) \right\} \geq \frac{1}{8}.
\end{equation*}%
}{\normalsize Set
\begin{equation*}
\xi _{k}\left( x,y\right) =\frac{1}{l\left( k\right) }\sum\limits_{j=1}^{l%
\left( k\right) }r_{j}\left( t_{0}\right) f_{k}\left( x\dotplus
x_{j},y\dotplus y_{j}\right) .
\end{equation*}%
Hence,%
\begin{equation*}
\boldsymbol{\mu }\left( \left\{ \left\vert \xi _{k}\ast \left(
F_{n_{k}}^{\left( \mathbf{q}\right) }\otimes F_{n_{k}}^{\left( \mathbf{p}%
\right) }\right) \right\vert >\gamma _{k}\left( \mathbf{p}\right) \right\}
\right) \geq \frac{1}{8}.
\end{equation*}%
It is easy to see that  }$\left\Vert \xi _{k}\right\Vert _{1}\leq 1.$%
{\normalsize \ Hence, this completes the proof. }
\end{proof}

\section{Examples}

In this section, we collect several nontrivial examples emerging from the
main results of this paper. Let us first examine examples related to Theorem \ref{T1}.

\begin{example}
$\left( C,\alpha \right) $-means: Let $q_{j}:=A_{j}^{\alpha -1},\alpha \in
(0,1]$, where
\begin{equation*}
A_{0}^{\alpha }=1,\,\,A_{n}^{\alpha }=\frac{\left( \alpha +1\right) \cdots
\left( \alpha +n\right) }{n!}.
\end{equation*}%
It is easy to see that $Q_{j}\sim j^{\alpha }$. Moreover,
\begin{equation*}
\frac{1}{Q_{2^{N}}}\sum\limits_{j=1}^{N}Q_{2^{j}}\leq \frac{c\left( \alpha
\right) }{2^{N\alpha }}\sum\limits_{j=1}^{N}2^{j\alpha }\leq c\left( \alpha
\right) <\infty
\end{equation*}%
Hence, $\left( C,\alpha \right) $-means $\left( \alpha >0\right) $ are
convergent almost everywhere for every $f\in L_{1}\left( \mathbb{I}\right) $
\cite{ya}.
\end{example}

\begin{example}
Let $q_{j}:=j^{\alpha -1},\alpha \in \lbrack 0,1)$. First, we consider the
case when $\alpha =0$. Then the sequence of operators coincides with the
logarithmic means
\begin{equation*}
T_{n}^{\left( \mathbf{q}\right) }(f;x):=\frac{1}{Q_{n}}\sum_{k=1}^{n-1}\frac{%
S_{k}(f;x)}{n-k}.
\end{equation*}
Note that $Q_{n}\sim \ln n$. Therefore,
\begin{equation*}
\sup\limits_{n\in \mathbb{N}}\frac{1}{Q_{n}}\sum_{k=1}^{|n|}Q_{2^{k}}\sim
\sup\limits_{n\in \mathbb{N}}\frac{\left\vert n\right\vert ^{2}}{\log \left(
n+1\right) }\sim \sup\limits_{n\in \mathbb{N}}\log \left( n+1\right) =\infty
,
\end{equation*}%
Hence, by Theorem \ref{T1} we infer the existence of a function $f\in L_{1}\left( \mathbb{I}\right) $, for which the
sequence $\{T_{n}^{\left( \mathbf{q}\right) }(f;x)\}$ diverges
almost everywhere \cite{GatGogiAMSDiv2}.

Now, let us assume that $\alpha \in (0,1)$. Then, one can see that%
\begin{equation*}
\frac{1}{Q_{2^{N}}}\sum\limits_{j=1}^{N}Q_{2^{j}}\leq \frac{c\left( \alpha
\right) }{2^{N\alpha }}\sum\limits_{j=1}^{N}2^{j\alpha }\leq c\left( \alpha
\right) <\infty
\end{equation*}
Hence, again Theorem \ref{T1} implies that the sequence $\{T_{n}^{\left( \mathbf{q}\right) }\left(
f\right) \}$ is convergent almost everywhere for every $f\in L_{1}\left(
\mathbb{I}\right)$ \cite{SimWeisz}.
\end{example}

\begin{example}
Let $q_{k}:=\frac{\log k}{k}$, then we have the following sequence of operators:
\begin{equation}
T_{n}^{\left( \mathbf{q}\right) }\left( f\right) :=\frac{1}{\log ^{2}\left(
n+1\right) }\sum\limits_{k=1}^{n-1}\frac{\log \left( n-k\right) S_{k}\left(
f,x\right) }{n-k}.  \label{lo}
\end{equation}%
One can examine that 
\begin{equation*}
Q_{2^{n}}\sim n^{2}
\end{equation*}%
and%
\begin{equation*}
\frac{1}{Q_{2^{n}}}\sum\limits_{k=1}^{n}Q_{2^{k}}\sim \frac{1}{n^{2}}%
\sum\limits_{k=1}^{n}k^{2}\rightarrow \infty \text{ }\left( k\rightarrow
\infty \right) ,
\end{equation*}%
Therefore, by Theorem \ref{T1}, there exists a function $f\in L_{1}\left( \mathbb{I}\right)
$, for which (\ref{lo}) unbondedly diverges almost everywhere.
\end{example}

Now, let us demonstrate several examples related to Theorem \ref{convmeas}.

\begin{example}
Assume that
\begin{equation*}
p_{j}=q_{j}:=\left\{
\begin{array}{c}
1,j=0 \\
0,j>0%
\end{array}%
.\right.
\end{equation*}%
Then%
\begin{eqnarray*}
T_{n,m}^{\left( \mathbf{q},\mathbf{p}\right) }(f;x) &:&=\frac{1}{Q_{n}}\frac{%
1}{P_{m}}\sum_{k=1}^{n}q_{n-k}p_{m-l}S_{k,l}(f;x,y) \\
&=&S_{n,m}(f;x,y)=f\ast \left( D_{n}\otimes D_{m}\right) \left( x,y\right) .
\end{eqnarray*}%
One can calculate that%
\begin{eqnarray*}
\beta \left( Q\right)  &=&\lim\limits_{k\rightarrow \infty }\frac{%
Q_{2^{k}}-Q_{2^{k-1}}}{Q_{2^{k-1}}}=0, \\
\beta \left( P\right)  &=&\lim\limits_{k\rightarrow \infty }\frac{%
P_{2^{k}}-P_{2^{k-1}}}{P_{2^{k-1}}}=0.
\end{eqnarray*}%
Then by Theorem \ref{convmeas} we infer that the set of the functions from $\mathit{L_{1}\left(
\mathbb{I}^{2}\right) }$ for which the sequence $S_{n,m}\left( f,x,y\right) $
is convergent in measure on $\mathbb{I}^{2}$ is the first Baire category in $%
L_{1}\left( \mathbb{I}^{2}\right) $.

We notice that the issues of convergence in measure for quadratic partial sums of
two-dimensional Walsh-Fourier series have been studied by Konyagin \cite%
{konyagin1993subsequence}, Getsadze \cite{getsadze1992divergence}, Lukomskii
\cite{luk1}.
\end{example}

\begin{example}
Let $q_{j}=p_{j}:=1/j$. Then
\begin{equation*}
T_{n,m}^{\left( \mathbf{q},\mathbf{p}\right) }(f;x,y):=\frac{1}{\log n\log m}%
\sum_{k=1}^{n-1}\sum\limits_{l=1}^{m-1}\frac{S_{k,l}(f;x,y)}{\left(
n-k\right) \left( m-l\right) }.
\end{equation*}%
It is easy to see that%
\begin{eqnarray*}
\beta \left( Q\right) &=&\lim\limits_{k\rightarrow \infty }\frac{%
Q_{2^{k}}-Q_{2^{k-1}}}{Q_{2^{k-1}}}=0, \\
\beta \left( P\right) &=&\lim\limits_{k\rightarrow \infty }\frac{%
P_{2^{k}}-P_{2^{k-1}}}{P_{2^{k-1}}}=0.
\end{eqnarray*}%
By Theorem \ref{convmeas} we conclude the set of the functions in $L_{1}\left( \mathbb{I}%
^{2}\right) $ for which the sequence $T_{n,m}^{\left( \mathbf{q},\mathbf{p}\right)
}\left( f;x,y\right) $ converges in measure on $\mathbb{I}^{2}$ is the first Baire category. 

This result has been proved by G\'at, Goginava and Tkebuchava \cite
{GoginavaMeasureGMJ}
\end{example}

\begin{example} Now, let us consider 
\begin{equation*}
q_{j}:=\left\{
\begin{array}{c}
1,j=0 \\
0,j>0%
\end{array}%
\right.
\end{equation*}%
and $p_{j}:=1/j$. One can see that 
\begin{equation*}
T_{n,m}^{\left( \mathbf{q},\mathbf{p}\right) }(f;x,y):=\frac{1}{\log m}%
\sum\limits_{l=1}^{m-1}\frac{S_{n,l}(f;x,y)}{\left( m-l\right) }.
\end{equation*}%
Then Theorem \ref{convmeas} yields that the set of the functions in $L_{1}\left( \mathbb{I}%
^{2}\right) $  for which the sequence $T_{n,m}^{\left( \mathbf{q},\mathbf{p}\right)
}\left( f;x,y\right) $ converges in measure on $\mathbb{I}^{2}$ is the first Baire category. 
\end{example}

%


\begin{example}
Let $q_{j}=p_{j}:=\frac{1}{\left( j+1\right) \log ^{\beta }\left( j+1\right)
},\beta <1$. Then 
$$Q_{n}=P_{n}\sim \log ^{1-\beta
}\left( n+1\right).$$ 
One can see that 
\small
\begin{eqnarray*}
T_{n,m}^{\left( \mathbf{q},\mathbf{p}\right) }\left( f;x,y\right) &=&\frac{1%
}{\log ^{1-\beta }\left( n+1\right) }\frac{1}{\log ^{1-\beta }\left(
m+1\right) }\sum_{k=1}^{n-1}\sum\limits_{l=1}^{m-1} 
\frac{S_{n-k,m-l}(f;x,y)}{\left( k+1\right) \log ^{\beta }\left( k+1\right)
\left( l+1\right) \log ^{\beta }\left( l+1\right) }.
\end{eqnarray*}%
\normalsize
Due to 
\begin{eqnarray*}
\beta \left( Q\right) &=&\lim\limits_{k\rightarrow \infty }\frac{k^{1-\beta
}-\left( k-1\right) ^{1-\beta }}{\left( k-1\right) ^{1-\beta }}=0, \\
\beta \left( P\right) &=&\lim\limits_{k\rightarrow \infty }\frac{k^{1-\beta
}-\left( k-1\right) ^{1-\beta }}{\left( k-1\right) ^{1-\beta }}=0.
\end{eqnarray*}%
Theorem \ref{convmeas} implies that the set of the functions in $L_{1}\left( \mathbb{I}%
^{2}\right) $ for which the sequence $T_{n,m}^{\left( \mathbf{q},\mathbf{p}\right)
}(f;x,y)$ converges in measure on $\mathbb{I}^{2}$ is the first Baire
category.
\end{example}

\section*{Declaration}{The author declare that they have no conflict of~interests.}

\end{document}